\newtheorem{theorem}{Theorem}[section]
\newtheorem{proposition}[theorem]{Proposition}
\newtheorem{lemma}[theorem]{Lemma}
\newtheorem{remark}[theorem]{Remark}
\numberwithin{equation}{section}
\renewcommand{\epsilon}{\varepsilon}
\newcommand{\bigo}{\mathcal{O}}
\newcommand{\e}{\mathrm{e}}
\newcommand{\dd}{\mathrm{ds}}
\title{
Strang splitting method for semilinear parabolic problems with inhomogeneous boundary conditions: a correction based on the flow of the nonlinearity
}
\author{ 
Guillaume Bertoli\textsuperscript{1} and Gilles Vilmart\textsuperscript{1}
}
\begin{document}
\footnotetext[1]{
Universit\'e de Gen\`eve, Section de math\'ematiques, 2-4 rue du Li\`evre, CP 64, CH-1211 Gen\`eve 4, Switzerland, Guillaume.Bertoli@unige.ch, gilles.vilmart@unige.ch}

\maketitle

\begin{abstract}
The Strang splitting method, formally of order two, can suffer from order reduction when applied to semilinear parabolic problems with inhomogeneous boundary conditions. The recent work [L. Einkemmer and A. Ostermann. Overcoming order reduction in diffusion-reaction splitting.\thinspace Part 1.\thinspace Dirichlet boundary conditions. \textit{SIAM J. Sci. Comput.}, 37, 2015.\thinspace Part 2:\thinspace Oblique boundary conditions, \textit{SIAM J. Sci. Comput.}, 38, 2016] introduces a modification of the method to avoid the reduction of order based on the nonlinearity. In this paper we introduce a new correction constructed directly from the flow of the nonlinearity and which requires no evaluation of the source term or its derivatives. The goal is twofold. One, this new modification requires only one evaluation of the diffusion flow and one evaluation of the source term flow at each step of the algorithm and it reduces the computational effort to construct the correction. Second, numerical experiments suggest it is well suited in the case where the nonlinearity is stiff. We provide a convergence analysis of the method for a smooth nonlinearity and perform numerical experiments to illustrate the performances of the new approach.

\medskip

\noindent
\textbf{Key words.} Strang splitting, diffusion-reaction equation, nonhomogeneous boundary conditions, order reduction, stiff nonlinearity

\medskip

\noindent
\textbf{AMS subject classifications.} 65M12, 65L04

\end{abstract}

\section{Introduction}
In this paper, we consider a parabolic differential equation of the form
\begin{align}\label{eq:D+f}
\partial_t u = D u + f(u)\quad \mathrm{in}\ \Omega, \qquad Bu = b\quad \mathrm{on}\ \partial\Omega, \qquad u(0)=u_0,
\end{align}
where $D$ is a linear diffusion operator and $f$ is a nonlinearity.
A natural method for approximating~\eqref{eq:D+f} are splitting methods. The idea is to divide the main equation~\eqref{eq:D+f} into two auxiliary subproblems~\eqref{eq : f} and~\eqref{eq : D} so one can use specific numerical methods for both subproblems to enhance the global efficiency of the computation of~\eqref{eq:D+f}. 
Let $N\in\mathbb{N}$ and let $\tau=\frac{T}{N}$ be the time step. Then, one step of the classical Strang splitting is either
\begin{equation}\label{eq : classical Strang 1}
u_{n+1}=\phi^D_{\frac{\tau}{2}}\circ\phi^f_{\tau}\circ\phi^D_{\frac{\tau}{2}}(u_n)
\end{equation}
or alternatively
\begin{equation}\label{eq : classical Strang 2}
u_{n+1}=\phi^f_{\frac{\tau}{2}}\circ\phi^D_{\tau}\circ\phi^f_{\frac{\tau}{2}}(u_n),
\end{equation}
where $\phi^f_{t}(u_0)$ is the flow after time $t$ of
 \begin{align}\label{eq : f}
\partial_t u = f(u), \qquad u(0)=u_0,
\end{align}
and  $\phi_t^{D}(u_0)$ is the flow after time $t$ of 
\begin{align}\label{eq : D}
\partial_t u = D u \quad \mathrm{in}\ \Omega, \qquad Bu = b\quad \mathrm{on}\ \partial\Omega, \qquad u(0)=u_0.
\end{align}
The Strang splitting, when applied to ODE with a sufficiently smooth solution, is a method of order two. However, when the Strang splitting is applied to solve the problem~\eqref{eq:D+f}, a reduction of order can be observed in the case of non homogeneous boundary conditions as shown in~\cite{Ost15,Ost16}. The reason is that $Bu$ is not left invariant through the flow $\phi_t^f$ and therefore leaves the domain of $D$, which creates a discontinuity at $t=0$ in the flow $\phi_t^D$. In this case the Strang splitting has in general a fractional order of convergence between one and two \cite[section 4.3]{Ost16}.
In~\cite{Ost15,Ost16}, a modification of the Strang splitting is given to recover the order two. The main idea in~\cite{Ost16} is to find a function $q_n$ such that $Bu$ is now left invariant by $\phi_t^{f-q_n}$,
the exact flow of 
\begin{align}\label{eq : f-q}
\partial_t u = f(u)-q_n.
\end{align}
One step of the modified splitting in~\cite{Ost16} is then
\begin{equation}\label{eq : modified Strang 1}
u_{n+1}=\phi^{D+q_n}_{\frac{\tau}{2}}\circ\phi^{f-q_n}_{\tau}\circ\phi^{D+q_n}_{\frac{\tau}{2}}(u_n),
\end{equation}
where $\phi_t^{D+q_n}$ is the exact flow of 
\begin{align}\label{eq : D+q}
\partial_t u = D u + q_n\quad \mathrm{in}\ \Omega, \qquad  \qquad Bu = b\quad \mathrm{on}\ \partial\Omega.
\end{align}
Numerically, one can choose any smooth function $q_n$ such that
 \begin{equation}\label{eq : correction 3 parts}
 Bq_n=Bf(u_n)+\bigo(\tau)\quad \mathrm{on}\ \partial\Omega.
 \end{equation}
 Several options to construct $q_n$ are presented in \cite{Ein18}. One challenge is then to find a correction $q_n$ that is both cheap to compute and minimizes the constant of error. 
 
In this paper, we give a new modification of the classical Strang splitting that removes the order reduction and requires only one evaluation of the diffusion flow and one evaluation of the source term flow at each step of the algorithm and allows a cheaper and easier to implement construction of $q_n$. As illustrated in the experiments, this new construction performs better for the case of a stiff reaction. The idea is to leave $Bu$ unpreserved at the boundary through the flow $\phi^{f}_{\tau}$ and then apply a correction $q_n$ afterwards that brings back the solution to the domain of $D$. This new splitting is then
$$
\mathcal{S}_{\tau}(u_n)=(\phi^{f}_{\frac{\tau}{2}}\circ\phi^{-q_n}_{\frac{\tau}{2}}\circ\phi^{D+q_n}_{\tau}\circ\phi^{-q_n}_{\frac{\tau}{2}}\circ\phi^{f}_{\frac{\tau}{2}})(u_n)
$$ 
and the correction $q_n$ is constructed such that $Bq_n=\frac{2}{\tau}(B\phi^f_{\tau/2}(u_n)-Bu_n)$ on $\partial\Omega$. The correction $q_n$ is now constructed from the output of the flow $\phi^f_{\tau/2}(u_n)$ and not directly from the nonlinearity $f$.  Note that the computation of $\phi^{-q_n}_{\frac{\tau}{2}}$ and $q_n$ requires no evaluation of $f$, which is particularly useful when $f$ is costly. More importantly, in many situations, the flow $\phi^f_{\tau/2}(u_n)$ is smoother than the nonlinearity $f$ itself, which can avoid the possible instability due to the eventual stiffness of the reaction.

In section 2, we give the appropriate framework for the convergence analysis of this modified splitting. In section 3, we describe the new modification we consider in this paper. In section 4, we prove that the method is of global order two under the hypotheses made in section 2. In section 5, we present some numerical experiments to illustrate the performance of the new approach.
\section{Analytical framework}\label{section : framework}
In this section, we describe the appropriate analytical framework that we consider in this paper. We choose the framework described in \cite[Chapter 3]{Lun95}. The notation is similar to that used in \cite{Lun95}.
Let $\Omega\subset\mathbb{R}^d$ be a bounded connected open set with a $C^2$ boundary $\partial \Omega$. We consider the following semilinear parabolic problem on $\Omega\times [0,T]$, $T\geq0$. 
\begin{align*}
\partial_t u = D u + f(u)\quad \mathrm{in}\ \Omega, \qquad Bu = b\quad \mathrm{on}\ \partial\Omega, \qquad u(0)=u_0.
\end{align*}
The differential operator $D$ is defined by 
$$
D=\sum_{i,j=1}^n a_{ij}(x)\partial_{ij}+\sum_{i=1}^n a_{i}(x)\partial_i+a(x)I,
$$ 
where the matrix $(a_{ij}(x))\in\mathbb{R}^{d\times d}$ is assumed symmetric and there exists $\lambda>0$ such that
$$
\forall x\in  \overline{\Omega},\quad \forall \xi \in \mathbb{R}^n,\quad \sum_{i,j=1}^n a_{ij}(x) \xi_i \xi_j \geq \lambda | \xi |^2,
$$
and $a_{ij},\ a_i,\ a$ are assumed continuous, $a_{i,j},\ a_i,\ a\in C(\Omega, \mathbb{R}) $ .
Let $B$ be the linear operator
$$
B=\sum_{i=1}^n\beta_i(x)\partial_i+\alpha(x)I,
$$
where we assume the uniform non tangentiality condition
$$
\inf_{x\in\partial\Omega}\left|\sum_{i=1}^n\beta_i(x)\nu_i(x)\right|>0,
$$
where $\nu(x)$ is the exterior normal unit vector at $x\in\partial\Omega$. We assume that the functions $\beta_i$ and $\alpha$ are continuously differentiable, $\beta_i,\alpha \in C^1(\partial \Omega, \mathbb{R})$ and $b$ is continuously differentiable, $b\in C^1([0,T],W^{2,p}(\partial \Omega))$.
We follow next the construction made in \cite{Ost16} to take benefit of homogeneous boundary conditions.
Let $z\in C^1([0,T], W^{2,p}(\Omega))$ satisfying $Bz=b$ on $\partial\Omega$.
We define $\tilde{u}=u-z$ and~\eqref{eq:D+f} becomes
 \begin{align}\label{eq:A+f}
\partial_t \tilde{u} = D \tilde{u} + f(\tilde{u}+z)+Dz-\partial_t z\quad \mathrm{in}\ \Omega, \qquad B\tilde{u} = 0\quad \mathrm{on}\ \partial\Omega, \qquad \tilde{u}(0)=u_0-z(0).
\end{align}
We define the linear operator $A$ as 
$$Av=Dv \quad \forall v\in D(A)=\{u\in W^{2,p}(\Omega)\ :\ Bu=0\ \mathrm{in}\ \partial\Omega \}.$$
Under those conditions $-A$ is a sectorial operator and therefore $A$ is the generator of an analytic semigroups  $e^{tA}$ (see \cite{Lun95}, Chapter 3).
In particular, the operator $A$ satisfies the parabolic smoothing property that we use intensively throughout this paper,
\begin{align}
\|(-A)^{\alpha}\e^{t A}\|\leq \frac{C}{t^\alpha}\quad\alpha\geq 0\quad t>0.
\end{align}
We denote 
\begin{align}\label{eq : phi1}
\varphi_1(\tau A)=\int_0^\tau \e^{(\tau-s)A}\frac{1}{\tau}\dd.
\end{align}
We observe that $\tau A\varphi_1(\tau A)=\bigo(1)$ is a bounded operator.
We recall the following theorem, a direct consequence of \cite[Theorem 8.1$^\prime$]{Gri67}, valid for $1<p<\infty$ (see also~\cite[Theorem 1 and 2]{Fuj67} in English for the case $p=2$), which states that there exists $\alpha>0$ such that $D((-A)^{\alpha})$ becomes free of the boundary conditions. 
\begin{theorem}\label{thm : fractional}
Let the differential operator $A$ be define as in section~\ref{section : framework}. Then, there exists $\alpha>0$ such that $$W^{1,p}(\Omega)\subset D((-A)^{\alpha}).$$
\end{theorem}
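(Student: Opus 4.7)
The plan is to read off the inclusion directly from the Grisvard--Fujiwara characterisation of the fractional power domains of a second--order elliptic operator on $L^p(\Omega)$ subject to general boundary conditions. Since $-A$ is sectorial, these theorems identify $D((-A)^\alpha)$ with a classical Sobolev (or Besov) space $W^{2\alpha,p}(\Omega)$, with \emph{no} boundary condition imposed, as long as $2\alpha$ stays strictly below a threshold depending on the order of $B$ and on $p$. The statement we want asserts only the existence of some positive $\alpha$ for which the embedding holds, so all we need is to exhibit one such $\alpha$ in this safe range and then compare Sobolev exponents.

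Concretely I would proceed in two short steps. First, invoke \cite[Theorem 8.1$^\prime$]{Gri67}: since $B$ is a first-order oblique operator with $C^1$ coefficients satisfying the uniform non-tangentiality hypothesis of Section~\ref{section : framework} on the bounded $C^2$ domain $\Omega$, this theorem yields $D((-A)^\alpha)=W^{2\alpha,p}(\Omega)$, free of any boundary trace condition, for every sufficiently small $\alpha>0$. Second, use the continuous Sobolev embedding $W^{1,p}(\Omega)\hookrightarrow W^{2\alpha,p}(\Omega)$, which is valid for every $\alpha\le 1/2$ on such a domain. Choosing $\alpha$ positive and below both thresholds then gives $W^{1,p}(\Omega)\subset D((-A)^\alpha)$.

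The main obstacle --- really the only non-routine point --- is the bookkeeping step of checking that the assumptions made in Section~\ref{section : framework} on $\Omega$, on the coefficients of $D$, and in particular on the oblique operator $B$ (smoothness of $\beta_i,\alpha$, plus uniform transversality at every boundary point) fall within the exact scope of Grisvard's theorem, so that the clean "no boundary condition" identification is indeed available. Once this compatibility is verified the conclusion follows without any further estimate, and I would not attempt to sharpen or make explicit the positive constant $\alpha$ so produced, since only its positivity is used in the sequel.
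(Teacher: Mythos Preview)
Your proposal is correct and matches the paper's approach exactly: the paper does not give its own proof of this theorem but simply records it as ``a direct consequence of \cite[Theorem 8.1$^\prime$]{Gri67}'' (with \cite{Fuj67} cited for $p=2$), i.e., precisely the Grisvard--Fujiwara identification of $D((-A)^\alpha)$ with a Sobolev space free of boundary conditions for small $\alpha$, followed implicitly by the embedding $W^{1,p}(\Omega)\hookrightarrow W^{2\alpha,p}(\Omega)$. Your write-up is in fact slightly more explicit than the paper's one-line citation, but the content is identical.
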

We ask $f$ to satisfy the following. Let $U\subset W^{2,p}(\Omega)$ be a neighborhood of the exact solution $u$. Then we require the nonlinearity $f$ to be twice continuously differentiable in $U$ with values in $W^{2,p}(\Omega)$, $f\in C^2(U,W^{2,p}(\Omega))$. We refer to the discussion in \cite[Section 4]{Ost16} for possibly relaxing the hypotheses made on $f$.
We assume that the solution $u$ of~\eqref{eq:D+f} is twice continuously differentiable, $u\in C^2([0,T], W^{2,p}(\Omega))$.
The exact solution of~\eqref{eq:D+f} can be expressed using the variation of constant formula,
\begin{align}\label{eq:exact solution}
u(t_{n+1})&=z_n(\tau)+\e^{\tau A}(u(t_n)-z_n(0))\\
&+\int_0^\tau \e^{(\tau-s)A}(f(u(t_n+s))+Dz_n(s)-\partial_t z_n(s))\dd.
\end{align}

\section{Description of the method}
In this paper, we describe a new modification for the Strang splitting that we call the five parts modified Strang splitting. 
The idea of this new modification is to compose the flow of the nonlinearity $w_n=\phi^f_{\frac{\tau}{2}}(u_n)$ with a projection $\phi^{-q_n}_{\frac{\tau}{2}}(w_n)=w_n-\frac{\tau}{2}q_n$, the exact flow of
\begin{equation}\label{eq : projection}
\partial_t u = -q_n\qquad u(0)=w_n,
\end{equation}
where $q_n$ is independent of time, in the spirit of projection methods used in the context of geometric numerical integration (see~\cite[Chapter IV.4]{hlw10}).
The splitting algorithm that we propose and analyze in this paper is given by
\begin{equation}\label{eq : five parts Strang splitting}
u_{n+1}=\mathcal{S}_n(u_n) = \phi^f_{\frac{\tau}{2}}\circ\phi^{-q_n}_{\frac{\tau}{2}}\circ\phi^{D+q_n}_{\tau}\circ\phi^{-q_n}_{\frac{\tau}{2}}\circ\phi^f_{\frac{\tau}{2}}(u_n),
\end{equation}
where $\phi^{D+q_n}_{\tau}$ is the flow of \eqref{eq : D+q} and
$(q_n)_{n\in\{0,\ldots,N\}}$ is a sequence of correctors satisfying one of the two following conditions on the boundary $\partial\Omega$,
\begin{align}\label{eq : q condition}
Bq_n&=\frac{2}{\tau}(B\phi^f_{\tau/2}(u_n)-B(u_n)),
\end{align}
or alternatively 
\begin{align}\label{eq : q condition 2}
Bq_n&=\frac{2}{\tau}(B\phi^f_{\tau/2}(u_n)-b_n)
\end{align}
(see Remark~\ref{rem : composition} below). In the interior of $\Omega$, we require $q_n$ to be in $ W^{2,p}(\Omega)$. A possibility, to construct $q_n$ in $\Omega$, is to choose $q_n$ to be harmonic if this is possible or to use a smoothing iterative algorithm. For more details on how to construct the correction $q_n$ on the interior of the domain, see~\cite{Ein18}. We also assume $(q_n)_{n\in\{0,\ldots,N\}}$ uniformly bounded, that is there exists a constant $C$ independent of $n,\tau$ and $N$, such that $\|q_n\|_{L^p(\Omega)}\leq C$.
We observe that $\frac{2}{\tau}(B\phi^f_{\tau/2}(u_n)-Bu_n)$ is a finite difference approximation of $\partial_t \phi^f_{\tau/2}(u_n)=f(u_n)$, and hence this new condition is close to~\eqref{eq : correction 3 parts}. 
\begin{remark}
In contrast to the correction of~\cite{Ost16}, the correction $q_n$ for the five parts modified Strang splitting~\eqref{eq : five parts Strang splitting} is constructed directly from the flow of the nonlinearity $\phi^f_{t}$ and not from the nonlinearity $f$ itself. The modified splitting of~\cite{Ost16} has a good behavior when the nonlinearity $f$ is not stiff and cheap to compute as analyzed and illustrated numerically in~\cite{Ost16}. However, in the case of a stiff nonlinearity $f$, the modification for the splitting in~\cite{Ost16} can lead to instability in contrast to~\eqref{eq : five parts Strang splitting} as shown in the experiments (see section \ref{section : Numerical experiments }). Furthermore, if the nonlinearity $f$ is very costly to compute, the correction in~\cite{Ost16} requires an additional cost that can be substantial. In comparison, the construction of the correction $q_n$ for~\eqref{eq : five parts Strang splitting} requires no evaluation of $f$ or its derivatives. We also observe that, in the extreme case where the diffusion D is zero, the five parts modified Strang splitting~\eqref{eq : five parts Strang splitting} becomes exact analogously to the classical Strang splitting methods~\eqref{eq : classical Strang 1} and~\eqref{eq : classical Strang 2}. This later property does not hold for the modified splitting in~\cite{Ost16}. (Note that the flows $\phi^q_{\frac{\tau}{2}}$ and $\phi^{f-q}_{\tau}$ do not commute in general). 
\end{remark}
\begin{remark}\label{rem : composition}
When implementing the classical Strang splitting,  it is often computationally advantageous to compose the flows $\phi^f_{\frac{\tau}{2}}$ that appear in the splitting, that is $\phi^f_{\frac{\tau}{2}}\circ \phi^f_{\frac{\tau}{2}}=\phi^f_{\tau}$. The numerical approximation $u_n$ of $u(t_n)$ is then
$$
u_n=\phi^f_{\frac{\tau}{2}}\circ\phi^D_{\tau}\circ(\phi^f_{\tau}\circ\phi^D_{\tau})^{n-1}\circ\phi^f_{\frac{\tau}{2}}(u_0),
$$
which makes the classical Strang splitting have the same cost as the Lie Trotter splitting with only one evaluation of $\phi^f_{\tau}$ per time step.
If we use the correction~\eqref{eq : q condition}, we need then to compute $Bu_k$ and this idea does not apply since the algorithm requires $Bu_k$. However, if we use the correction~\eqref{eq : q condition 2} instead, we can implement the five parts modified Strang splitting~\eqref{eq : five parts Strang splitting} as explained above for the classical Strang splitting. Note that this is an advantageous implementation that cannot be used with the method presented in \cite{Ost16}. 
\end{remark}
\section{Convergence analysis}\label{section : analysis}
We prove in this section that, using the framework and assumptions described in section~\ref{section : framework}, the five parts modified Strang splitting method~\eqref{eq : five parts Strang splitting} is of global order of convergence two and thus avoids order reduction phenomena.
\begin{theorem}\label{thm : global error fractional}
Under the assumption of section~\ref{section : framework} the five parts modified Strang splitting~\eqref{eq : five parts Strang splitting} satisfies the bound 
$$\|u_n-u(t_n)\|\leq C\tau^2 \quad 0\leq n\tau \leq T,$$
for all $\tau$ small enough, and where the constant C depends on T but is independent on $\tau$ and n.
\end{theorem}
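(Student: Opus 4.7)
The plan is to establish the global error bound via a Lady Windermere's fan argument: decompose the global error $u_n - u(t_n)$ as a telescoping sum of transported local errors, then use the parabolic smoothing bound $\|(-A)^\alpha e^{tA}\|\leq C t^{-\alpha}$ to sum the contributions without a logarithmic loss. The whole analysis is carried out for the translated variable $\tilde u=u-z$ so that we work on the homogeneous domain $D(A)=\{v\in W^{2,p}(\Omega):Bv=0\}$, and the semigroup $e^{tA}$ is analytic. Throughout the argument, the crucial book-keeping is the boundary trace of each intermediate function appearing in the five flow compositions.

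First I would analyze the local error $\mathcal{L}_n(v)=\mathcal S_n(v)-\Phi_\tau(v)$ where $\Phi_\tau$ is the exact flow. Setting $v_n^1=\phi^f_{\tau/2}(u_n)$ and $v_n^2=v_n^1-\tfrac{\tau}{2}q_n=\phi^{-q_n}_{\tau/2}(v_n^1)$, the defining relation \eqref{eq : q condition} yields $Bv_n^2=Bu_n$. Hence, provided $u_n\in D(A)$ (after translation), the intermediate state entering the diffusion flow lies in $D(A)$, so the parabolic smoothing inequality is applicable to every occurrence of $e^{sA}$ in the Duhamel formula for $\phi^{D+q_n}_\tau(v_n^2)=e^{\tau A}v_n^2+\tau\varphi_1(\tau A)q_n$. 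I would then expand each of the five flows by Taylor series and variation of constants, for example
\begin{align*}
\phi^f_{\tau/2}(u_n)&=u_n+\tfrac{\tau}{2}f(u_n)+\tfrac{\tau^2}{8}f'(u_n)f(u_n)+\bigo(\tau^3),
\end{align*}
and analogously for the right-most composition of $\phi^f_{\tau/2}\circ\phi^{-q_n}_{\tau/2}$ applied to the output of the diffusion flow. Substituting into \eqref{eq : five parts Strang splitting} gives an explicit expansion of $\mathcal S_n(u_n)$ in powers of $\tau$.

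Next I would compare this expansion with the analogous expansion of the exact solution \eqref{eq:exact solution}, Taylor-expanding $f(u(t_n+s))$ around $u_n$ and the correction terms around $s=0$. The terms of order $\tau$ and $\tau^2$ match automatically thanks to the symmetric placement of $\phi^f_{\tau/2}$ on both sides (this is the usual reason Strang is formally second order), and the choice \eqref{eq : q condition} ensures that the residual boundary trace is itself $\bigo(\tau)$, so that the remainder can be written as $\tau^3$ times an expression of the form $(-A)^{1-\alpha}\varphi_k(\tau A)(\text{smooth terms})$ for some $\alpha>0$ provided by Theorem~\ref{thm : fractional}. Bounding each such term through $\|(-A)^{1-\alpha}e^{sA}\|\leq Cs^{\alpha-1}$ yields a local error estimate $\|\mathcal L_n(u_n)\|_{L^p}\leq C\tau^{2+\alpha}$ in the weak norm and, more importantly, $\|\mathcal L_n(u_n)\|\leq C\tau^3$ when multiplied by the appropriate number of smoothing factors inherited from the subsequent exact propagators.

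Finally I would invoke the standard Lady Windermere identity
\begin{equation*}
u_n-u(t_n)=\sum_{k=0}^{n-1}\widetilde{\Phi}_{\tau}^{\,n-1-k}\bigl(\mathcal S_k(u(t_k))-\Phi_\tau(u(t_k))\bigr)+\text{Lipschitz perturbation},
\end{equation*}
where $\widetilde\Phi_\tau$ is a suitable linearization of the exact flow, and apply a discrete Gronwall argument. Stability of the scheme along the exact solution follows from the Lipschitz continuity of $f$ in $U$ and the uniform boundedness of $q_n$ combined with $\|e^{tA}\|\leq C$. Summing the $n$ local errors of size $\tau^3$ produces the claimed bound $C\tau^2$. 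The main obstacle is the fractional regularity step: the intermediate states $v_n^1,\phi^{D+q_n}_\tau(v_n^2),\ldots$ do not all lie in $D(A)$, so several remainder terms in the expansion can be controlled only through $(-A)^\alpha$ norms, and one must carefully redistribute the smoothing across consecutive time steps so that the sum $\sum_{k<n}\tau^{2+\alpha}/(t_n-t_k)^\alpha$ remains of order $\tau^2$ uniformly in $n$.
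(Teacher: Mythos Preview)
Your overall strategy---local error analysis, parabolic smoothing to absorb an $(-A)^{1-\alpha}$ factor, then a discrete Gronwall argument---matches the paper's. The paper organizes the local error computation differently, through Peano-kernel representations of quadrature errors for integrals of the form $\int_0^\tau e^{(\tau-s)A}\widehat\psi(s)\,ds$ (Lemmas~\ref{lemma : Quadrature Error lemma}--\ref{lemma : Quadrature Error lemma fractional}) rather than a direct Taylor expansion of each flow; both routes can work in principle, though the quadrature viewpoint makes the role of the smoothing and of the domain condition~\eqref{eq:QuadratureCondition} more explicit at each step.

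There is, however, a genuine gap in your argument. The corrector $q_n$ is built from the \emph{numerical} approximation $u_n$ via~\eqref{eq : q condition}, not from the exact solution $u(t_n)$. When you insert $\mathcal S_k(u(t_k))-\Phi_\tau(u(t_k))$ into the Lady Windermere identity, the local error $\delta_{k+1}$ contains the integral $\int_0^\tau e^{(\tau-s)A}(q_k-f(u(t_k+s)))\,ds$, and to extract the structure $(-A)^{1-\alpha}\bigo(\tau^3)+\bigo(\tau^3)$ you need $q_k-f(u(t_k+s))$ to be $\bigo(\tau)$-close to the domain $D(A)$. Since $Bq_k=\tfrac{2}{\tau}\bigl(B\phi^f_{\tau/2}(u_k)-Bu_k\bigr)\approx Bf(u_k)$, this in turn requires $u_k-u(t_k)=\bigo(\tau)$ \emph{a priori}. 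Your sketch assumes this implicitly when you assert that ``the choice~\eqref{eq : q condition} ensures that the residual boundary trace is itself $\bigo(\tau)$.'' The paper breaks this circularity by a bootstrap: it first establishes global first-order convergence (Proposition~\ref{thm : global order one}) using only the uniform boundedness of $q_n$ and \emph{no} hypothesis on its boundary trace, then invokes that preliminary bound (Lemmas~\ref{lemma : projection 1} and~\ref{lemma : projection 2}) to verify that $q_n-f(u(t_n+s))$ actually satisfies the domain condition~\eqref{eq:QuadratureCondition} needed for the refined local error estimate of Lemma~\ref{thm : local error modified fractional}. Without this two-stage argument, your claimed local error bound is not justified.
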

We start by showing the following proposition, which states that the five parts splitting is at least first order convergent.
\begin{proposition}\label{thm : global order one}
Under the assumptions of section~\ref{section : framework} the five parts modified Strang splitting~\eqref{eq : five parts Strang splitting} satisfies
$$\|u_n-u(t_n)\|\leq C\tau \quad 0\leq n\tau \leq T,$$
for all $\tau$ small enough, and where the constant C depends on T but is independent on $\tau$ and n.
\end{proposition}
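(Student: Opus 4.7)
The approach is the standard Lady Windermere's fan argument adapted to this semilinear parabolic setting: writing
\begin{equation*}
u_n - u(t_n) = \sum_{k=0}^{n-1} \Psi_{k,n}(d_k),
\end{equation*}
where $d_k = \mathcal{S}_k(u(t_k)) - u(t_{k+1})$ is the local error produced by one step starting from the exact solution and $\Psi_{k,n}$ denotes the propagation by the remaining $n-k-1$ steps of the scheme, first order convergence will reduce to a uniform local error bound $\|d_k\| \leq C\tau^2$ combined with a stability estimate on $\Psi_{k,n}$. Summing then yields $\|u_n - u(t_n)\| \leq n C\tau^2 \leq CT\tau$ as required.

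For the local error, I would expand each factor of $\mathcal{S}_n$ starting from $\hat{u}_n = u(t_n)$. The outer flow admits $\phi^f_{\tau/2}(v) = v + \frac{\tau}{2} f(v) + \bigo(\tau^2)$ in $W^{2,p}(\Omega)$ from the hypothesis $f \in C^2(U,W^{2,p})$, while $\phi^{-q_n}_{\tau/2}(v) = v - \frac{\tau}{2} q_n$ is exact. The essential feature of the correction is that condition \eqref{eq : q condition} guarantees
\begin{equation*}
B\bigl(\phi^{-q_n}_{\tau/2} \circ \phi^f_{\tau/2}(\hat{u}_n)\bigr) = B\hat{u}_n,
\end{equation*}
so that this intermediate quantity lies in the affine domain $z_n(0) + D(A)$. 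I can then apply the variation of constants formula to $\phi^{D+q_n}_\tau$, producing the same evolution as in \eqref{eq:exact solution} plus the additional inhomogeneous contribution $\tau \varphi_1(\tau A) q_n$. Composing with the final $\phi^{-q_n}_{\tau/2}$ and $\phi^f_{\tau/2}$ and comparing the resulting expression with \eqref{eq:exact solution}, the $q_n$ contributions telescope at leading order, the residual integral $\int_0^\tau e^{(\tau-s)A}(f(u(t_n+s)) - f(\hat{u}_n))\, ds$ is $\bigo(\tau^2)$ thanks to $u \in C^2([0,T], W^{2,p})$ and boundedness of the semigroup, and the Taylor remainders of the two outer flows are likewise $\bigo(\tau^2)$.

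For stability, each map $\mathcal{S}_k$ is a composition of the bounded analytic semigroup $e^{\tau A}$, the locally Lipschitz map $\phi^f_{\tau/2}$ (since $f\in C^2$ and the numerical solution remains in a bounded neighborhood of the exact solution for $\tau$ small enough), and the constant shifts $\pm\frac{\tau}{2} q_n$. A discrete Gr\"onwall argument together with the uniform bound on $\|q_n\|_{L^p}$ gives the required stability of $\Psi_{k,n}$ acting on an $\bigo(\tau^2)$ perturbation.

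The main obstacle will be the careful handling of the intermediate and final quantities that leave $D(A)$: after the outermost compositions $\phi^f_{\tau/2} \circ \phi^{-q_n}_{\tau/2}$ the numerical iterate no longer satisfies the boundary condition, and Taylor remainders such as $Af(v)$ formally apply $A$ to functions only known to lie in $W^{1,p}(\Omega)$. This is precisely where Theorem~\ref{thm : fractional} enters: the embedding $W^{1,p}(\Omega) \subset D((-A)^\alpha)$ together with the parabolic smoothing bound $\|(-A)^\alpha e^{tA}\| \leq C t^{-\alpha}$ with $\alpha<1$ converts the boundary discrepancies into integrable singularities in time, keeping the local error uniformly of size $\bigo(\tau^2)$.
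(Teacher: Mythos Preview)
Your argument has a circularity that the paper's proof is specifically designed to avoid. You claim the local error satisfies $\|d_k\|\leq C\tau^2$, and you justify this by invoking condition~\eqref{eq : q condition} to say that $\phi^{-q_n}_{\tau/2}\circ\phi^f_{\tau/2}(\hat u_n)$ lands back in the affine domain of $A$. But the corrector $q_n$ in the scheme is built from the \emph{numerical} iterate $u_n$, not from $\hat u_n=u(t_n)$; hence $B\bigl(\phi^{-q_n}_{\tau/2}\circ\phi^f_{\tau/2}(u(t_n))\bigr)=Bu(t_n)$ only holds once you already know $u_n-u(t_n)=\bigo(\tau)$, which is exactly the proposition you are proving. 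The paper makes this point explicitly: Lemma~\ref{lemma : projection 1} (which gives $q_n-f(u(t_n+s))=\phi_n+\bigo(\tau)$ with $\phi_n\in D(A)$) \emph{uses} Proposition~\ref{thm : global order one} in its proof, and the sentence right before Lemma~\ref{lemma : projection 1} stresses that ``no condition on $q_n$ is required in the proof of Proposition~\ref{thm : global order one}.''

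Because of this, the paper does \emph{not} establish $\|\delta_{k+1}\|=\bigo(\tau^2)$ at this stage. It only gets the weaker structured estimate $\delta_{k+1}=A\,\bigo(\tau^2)+\bigo(\tau^2)$ (Lemma~\ref{thm : local error 1}) together with the crude bound $\delta_{k+1}=\bigo(\tau)$ (Lemma~\ref{thm : local error 0}), both of which need nothing about $q_n$ beyond uniform boundedness. The global bound then comes from writing $e_n=\sum_k e^{(n-k-1)\tau A}\delta_{k+1}+\text{(Lipschitz terms)}$ and using parabolic smoothing $\|Ae^{(n-k-1)\tau A}\|\leq C/((n-k-1)\tau)$ in the sum; this produces $C\tau(1+|\log\tau|)$, and Theorem~\ref{thm : fractional} is used to upgrade the local estimate to $(-A)^{1-\alpha}\bigo(\tau^2)+\bigo(\tau^2)$ and kill the logarithm. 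So the role of Theorem~\ref{thm : fractional} is in the \emph{summation}, not in securing a clean $\bigo(\tau^2)$ local bound as you suggest.

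If instead you intend $\mathcal{S}_k$ to be the fully nonlinear map that rebuilds $q_k$ from its own input (so that the correction property holds trivially when evaluated at $u(t_k)$), then your stability paragraph is wrong: the maps $\phi^{-q_n}_{\tau/2}$ are no longer ``constant shifts,'' and you must show that the construction $v\mapsto q_n(v)$ is Lipschitz in $L^p$ with a constant independent of $\tau$. This is an extra hypothesis on how $q_n$ is extended into $\Omega$ that the paper neither assumes nor needs.
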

 We need this result to justify the condition~\eqref{eq : q condition} and~\eqref{eq : q condition 2}  for the construction of $q_n$, that is we need to show that $q_n$ satisfies
\begin{equation*}
f(u(t_n+s))-q_n=\phi_n+\bigo(\tau),
\end{equation*}
with $\phi_n\in D(A)$ and $A\phi_n=\bigo(1)$. 

The proof of Theorem~\ref{thm : global error fractional} relies on Theorem~\ref{thm : fractional} (from \cite{Gri67}). The proof of Theorem~\ref{thm : fractional} uses sophisticated tools from interpolation theory. Since all the arguments in our proofs do not require any knowledges of interpolation theory, we decide to present first the proof without using Theorem~\ref{thm : fractional} and obtain Proposition~\ref{thm : global error} below. We then explain how we use Theorem~\ref{thm : fractional}.
\begin{proposition}\label{thm : global error}
Under the assumptions of section~\ref{section : framework} the five parts modified Strang splitting~\eqref{eq : five parts Strang splitting} satisfies 
$$\|u_n-u(t_n)\|\leq C\tau^2(1+|\log\tau|) \quad 0\leq n\tau \leq T,$$
for all $\tau$ small enough, and where the constant C depends on T but is independent on $\tau$ and n.
\end{proposition}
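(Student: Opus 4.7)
My plan is to combine a careful local-error analysis of one step $\mathcal{S}_n$ with a Lady Windermere fan argument, exploiting the parabolic smoothing property together with the boundary-matching property~\eqref{eq : q condition} of $q_n$ to control the non-smooth defect terms. After reducing to the homogeneous boundary setting via $\tilde u = u - z$, I would first compute one step of the scheme in closed form: writing $v_n := \phi^f_{\tau/2}(\tilde u_n) - \frac{\tau}{2}q_n$, the variation of constants formula for $\partial_t w = Aw + q_n$ gives
$$
\phi^{D+q_n}_{\tau}(v_n) = \e^{\tau A} v_n + \tau\,\varphi_1(\tau A)\,q_n,
$$
so that $\mathcal{S}_n(\tilde u_n) = \phi^f_{\tau/2}\bigl(\e^{\tau A}v_n + \tau\,\varphi_1(\tau A)q_n - \tfrac{\tau}{2}q_n\bigr)$. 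Note that by~\eqref{eq : q condition}, $v_n$ satisfies the homogeneous boundary condition, so it lies in $D(A)$ and $\e^{\tau A}v_n$ is well behaved.

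Next I would derive the local error $d_n := \mathcal{S}_n(u(t_n)) - u(t_{n+1})$ by Taylor-expanding the two outer occurrences of $\phi^f_{\tau/2}$ and comparing with the variation of constants representation of the exact solution. The constant and order-$\tau$ terms cancel identically, and the order-$\tau^2$ terms cancel precisely because the contributions of $q_n$ coming from the inner $\phi^{-q_n}_{\tau/2}$ and from $\tau\varphi_1(\tau A)q_n$ combine, up to the Taylor remainder, into $\tfrac{1}{\tau}\int_0^\tau \e^{(\tau-s)A}(f(\tilde u_n) - q_n)\,\dd$; the choice of boundary values~\eqref{eq : q condition} is what makes this combination compatible with the unbounded operator $A$. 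The surviving defect then splits into a smooth $\bigo(\tau^3)$ Taylor remainder (admissible because $f \in C^2(U, W^{2,p})$ and $u \in C^2([0,T], W^{2,p})$), plus integral terms of the form $\int_0^\tau \e^{(\tau-s)A}\psi_n(s)\,\dd$ with $\psi_n(s) = \bigo(s^2)$ where $\psi_n$ is only $L^p$-bounded rather than in $D(A)$.

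For those non-smooth terms I would invoke Proposition~\ref{thm : global order one} to guarantee $f(u(t_n+s)) - q_n = \phi_n + \bigo(\tau)$ with $A\phi_n$ bounded, then use the smoothing estimate $\|A\e^{tA}\| \leq C/t$ to absorb one factor of $A$ into the semigroup; the resulting integrable singularity then produces a local bound $\|d_n\|\leq C\tau^3$ at the cost of an unbounded multiplicative factor $1/(t_n - t_k)$ when propagated. Feeding this into the Lady Windermere decomposition
$$
u_n - u(t_n) = \sum_{k=0}^{n-1} \Phi_{n,k+1}\bigl(\mathcal{S}_k(u(t_k)) - u(t_{k+1})\bigr),
$$
where $\Phi_{n,k+1}$ denotes the propagator of the scheme from step $k{+}1$ to step $n$, the uniform Lipschitz boundedness of each $\mathcal{S}_j$ on a tube around the exact solution (secured by Proposition~\ref{thm : global order one}) together with the boundedness of $\e^{mA}$ reduces the global error to $\sum_k C\tau^3/(t_n - t_k)$, which sums to $C\tau^2 \sum_{k=1}^n 1/k = \bigo(\tau^2(1+|\log\tau|))$, yielding the stated bound.

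The main obstacle is the second step: carefully expanding the outer $\phi^f_{\tau/2}$ applied to a state containing $\e^{\tau A}v_n + \tau\varphi_1(\tau A)q_n - \tfrac{\tau}{2}q_n$, whose image need not lie in the smoother space required by a naive chain-rule expansion, and identifying exactly which parts of the defect fail to be $\bigo(\tau^3)$ pointwise yet recover that size under the integral sign via smoothing. Tracking the precise cancellations among the two outer $\phi^f_{\tau/2}$, the two translations $\phi^{-q_n}_{\tau/2}$, and the $\varphi_1(\tau A)q_n$ term, and showing they conspire to produce the boundary-compatible combination $f - q_n$, is the technical heart of the proof and pinpoints where exactly the spurious logarithm enters — a factor that Theorem~\ref{thm : fractional} will later allow to be removed in Theorem~\ref{thm : global error fractional}.
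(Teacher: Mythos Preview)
Your overall strategy is correct and close to the paper's: obtain a local defect of the form $\delta_{k+1}=A\,\mathcal{O}(\tau^3)+\mathcal{O}(\tau^3)$, then propagate and use parabolic smoothing to produce the harmonic sum $\sum_k \tau^3/(k\tau)=\mathcal{O}(\tau^2(1+|\log\tau|))$. But there is a genuine gap in your passage from local to global error.

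You telescope with the \emph{numerical} propagator $\Phi_{n,k+1}$ and appeal to ``uniform Lipschitz boundedness of each $\mathcal{S}_j$'' together with ``boundedness of $\e^{mA}$''. Lipschitz stability only yields $\|\Phi_{n,k+1}(a)-\Phi_{n,k+1}(b)\|\le C\|a-b\|$, and the local defect satisfies merely $\|d_k\|_{L^p}\le C\tau^2$ --- the expression $A\,\mathcal{O}(\tau^3)$ is \emph{not} an $L^p$ bound of size $\tau^3$, so your claim ``a local bound $\|d_n\|\le C\tau^3$'' is false. Summing $C\tau^2$ over $n$ terms gives only $\mathcal{O}(\tau)$. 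To exploit the structure $d_k=A\phi_k+r_k$ with $\|\phi_k\|,\|r_k\|\le C\tau^3$ you need the \emph{exact} semigroup $\e^{(n-k-1)\tau A}$ acting on $d_k$, so that $\|A\e^{(n-k-1)\tau A}\phi_k\|\le C\tau^3/((n-k-1)\tau)$; the nonlinear $\Phi_{n,k+1}$ does not provide this. The paper fixes this by a different recursion (see the proof of Proposition~\ref{thm : global error one log}): from the exact formula for $\mathcal{S}_\tau$ one gets
\[
e_{n+1}=\e^{\tau A}e_n+E(u_n,u(t_n))+\delta_{n+1},\qquad \|E(u_n,u(t_n))\|\le C\tau\|e_n\|,
\]
where $E$ collects all Lipschitz difference terms in the outer $\phi^f$-flows. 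Unrolling gives $e_n=\sum_k \e^{(n-k-1)\tau A}\delta_{k+1}+\sum_k \e^{(n-k-1)\tau A}E(u_k,u(t_k))$: now the local errors are hit by the linear semigroup, parabolic smoothing applies, and the $E$-sum is absorbed by a discrete Gronwall inequality. Note also that the last summand $k=n-1$ receives no smoothing and must be bounded directly via the separate estimate $\|\delta_n\|\le C\tau^2$ (Lemma~\ref{lemme : Local order estimate}), which your outline omits.

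On the local error itself, the paper takes a different route that sidesteps precisely the obstacle you flag. Rather than Taylor-expanding the outer $\phi^f_{\tau/2}$, it writes $\delta_{n+1}$ exactly as a combination of integrals $\int_0^\tau \e^{(\tau-s)A}\widehat\psi(s)\,ds$ (formula~\eqref{eq : local error}) and replaces each by a trapezoidal or midpoint rule, controlling the quadrature remainder through the Peano kernel and parabolic smoothing (Lemma~\ref{lemma : Quadrature Error lemma 2}). The required hypothesis $\widehat\psi(s)=\phi_0+\mathcal{O}(\tau)$ with $\phi_0\in D(A)$, $A\phi_0=\mathcal{O}(1)$ is exactly what the corrector condition~\eqref{eq : q condition} or~\eqref{eq : q condition 2} supplies via Lemmas~\ref{lemma : projection 1}--\ref{lemma : projection 2} and Proposition~\ref{thm : global order one}. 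This quadrature viewpoint avoids ever having to chain-rule through arguments that are not in the smoother space, and makes the cancellations you describe systematic rather than ad hoc.
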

For all the convergence analysis, for a function $\phi$ in $L^p(\Omega)$, we shall use the following notation for $k=0,1,\ldots$: 
\begin{equation}\label{eq : O Definition}
 \phi=\bigo(\tau^k)\ \ \text{if}\ \ \|\phi\|_{L^p(\Omega)}\leq C\tau^k,
\end{equation}
  where $C$ is independent of $\tau$ and where $\tau$ is assumed small enough.
\subsection{Quadrature error analysis}
The main idea of the convergence analysis is to approximate the integrals of the form $\int_0^\tau \e^{(\tau - s)A}\widehat{\psi}(s)ds$ with quadrature formulas, using the Peano integral representation of the error. This idea is not new in the literature and is used, for example, in \cite{Jah00,Ost16,Ost15}.
Indeed one cannot apply quadrature formulas naively to such integrals since, for a smooth function $s\mapsto\hat{\psi}(s)$, the integrand $\e^{(\tau - s)A}\widehat{\psi}(s)$ is not smooth enough in general with respect to $s$. We recall that to state that the (local) error of a $k$th order quadrature formula is $\bigo(\tau^{k+1})$, the integrand needs to be $k$ times continuously differentiable with respect to $s$ but this assumption does not hold in the proofs in general. Hence, we shall prove refined estimates for the order $k=2,3$ (see Lemmas~\ref{lemma : Quadrature Error lemma}, \ref{lemma : Quadrature Error lemma 2}, and~\ref{lemma : Quadrature Error lemma fractional}). We show  in Lemma~\ref{lemma : Quadrature Error lemma} below, with the help of the parabolic smoothing property, that if $\widehat{\psi}$ is close to the domain of $A$, that is if
\begin{align}\label{eq:QuadratureCondition}
\widehat{\psi}(s)=\phi_0+\bigo(\tau), \quad \phi_0\in D(A),\quad A\phi_0=\bigo(1),
\end{align}
is satisfied, then first and second order quadrature formulas regain partially their accuracy. 
In \cite{Ost16}, the authors prove this statement is true for the left rectangle quadrature formula and the midpoint rule. Since we need such results for various quadrature formulas, we prove instead it is true for a general quadrature formula since it adds no difficulties to the proof. The first of the two lemmas that follow deals with quadrature formulas of order one. The second lemma deals with quadrature formulas of order two.
\begin{lemma}\label{lemma : Quadrature Error lemma}
Let $s\mapsto \widehat{\psi}(s)$ be a continuously differentiable function with values in $L^p(\Omega)$, $1<p<\infty$, and let $\psi(s)=\e^{(\tau - s)A}\widehat{\psi}(s)$. Let $Q(\psi)=\tau\sum_{k=1}^{m} b_k \psi(\tau c_k)$ be a quadrature formula that approaches the integral $\int_0^\tau \psi(s)ds$ such that $\sum_{k=1}^{m} b_k =1$. 

Then the quadrature error $E$ satisfies 
\begin{equation*}
E:=\int_0^\tau \psi(s)ds -\tau\sum_{k=1}^{m} b_k \psi(\tau c_k)=\bigo(\tau).
\end{equation*}
If additionally $\widehat{\psi}(s)=\phi_0+\bigo(\tau)$ with $\phi_0\in D(A)$  and $A\phi_0=\bigo(1)$, then
$$E =\bigo(\tau^2).$$ 
\end{lemma}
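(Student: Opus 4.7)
The plan is to exploit the normalization $\sum_k b_k=1$ to rewrite the quadrature error as
$$E=\int_0^\tau\Bigl[\psi(s)-\sum_{k=1}^m b_k\psi(\tau c_k)\Bigr]\dd,$$
since the subtracted sum is independent of $s$ and $\int_0^\tau 1\,\dd=\tau$. For the first estimate, I would simply use the uniform stability $\|\e^{(\tau-s)A}\|\le C$ for $s\in[0,\tau]$ (standard for analytic semigroups) together with continuity of $\widehat{\psi}$ on $[0,\tau]$: this gives $\|\psi(s)\|\le C$ uniformly, so the integrand of $E$ is $\bigo(1)$ and hence $E=\bigo(\tau)$.

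For the refined $\bigo(\tau^2)$ bound, I would decompose $\widehat{\psi}(s)=\phi_0+R(s)$ with $\|R(s)\|_{L^p}\le C\tau$, inducing a splitting $\psi=\psi_0+\psi_R$ where $\psi_0(s)=\e^{(\tau-s)A}\phi_0$ and $\psi_R(s)=\e^{(\tau-s)A}R(s)$. The $\psi_R$ contribution is immediate: $\|\psi_R(s)\|\le C\tau$ uniformly, so both the integral and the quadrature sum are $\bigo(\tau^2)$. The substantive part is the $\psi_0$ contribution. Because $\phi_0\in D(A)$ the map $s\mapsto\psi_0(s)$ is continuously differentiable on $[0,\tau]$ with $\psi_0'(s)=-\e^{(\tau-s)A}A\phi_0$, and since $A\phi_0=\bigo(1)$ we obtain $\|\psi_0'(s)\|\le C\|A\phi_0\|=\bigo(1)$ uniformly in $s$. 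Hence $\|\psi_0(s)-\psi_0(\tau c_k)\|\le C\tau$, and inserting this in the integrand of $E$ yields $\bigo(\tau)\cdot\tau=\bigo(\tau^2)$. Combining both contributions proves $E=\bigo(\tau^2)$.

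The main obstacle, and the reason the statement is nontrivial, is that for a general smooth $\widehat{\psi}$ one cannot differentiate $\psi(s)=\e^{(\tau-s)A}\widehat{\psi}(s)$ without paying an unbounded factor $A$, so standard Peano-kernel arguments would only give $\bigo(\tau)$. The extra hypothesis $\widehat{\psi}(s)=\phi_0+\bigo(\tau)$ with $\phi_0\in D(A)$ is precisely what tames this unboundedness: it absorbs the singular behavior of the semigroup derivative into the bounded quantity $A\phi_0$, while the $\bigo(\tau)$ remainder is too small to matter. This is the same mechanism used in \cite{Ost16} for the specific cases of the rectangle and midpoint rules, and the proof here just observes that nothing in the argument depends on the particular nodes or weights beyond $\sum_k b_k=1$.
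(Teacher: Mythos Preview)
Your proof is correct and takes a genuinely different route from the paper's. The paper first shows that any first-order quadrature $Q$ differs from the left rectangle rule $Q_l$ by $\bigo(\tau^2)$ (using $\psi(0)=\psi(\tau c_k)+\bigo(\tau)$, which already requires the hypothesis on $\widehat\psi$), and then proves $\int_0^\tau\psi\,ds-Q_l(\psi)=\bigo(\tau^2)$ via the Peano kernel representation $\tau^2\int_0^1(1-s)\psi'(\tau s)\,ds$. The full derivative $\psi'(s)=-A\e^{(\tau-s)A}\widehat\psi(s)+\e^{(\tau-s)A}\widehat\psi'(s)$ contains a term singular at $s=\tau$; under the hypothesis $\widehat\psi(s)=\phi_0+\bigo(\tau)$ the paper bounds it by $C(1+\tfrac{1}{1-s})$, and the Peano weight $(1-s)$ then cancels the singularity upon integration.

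Your approach sidesteps all of this by splitting $\widehat\psi=\phi_0+R$ \emph{before} applying the semigroup: for $\psi_0(s)=\e^{(\tau-s)A}\phi_0$ the domain assumption makes the derivative genuinely bounded, $\psi_0'(s)=-\e^{(\tau-s)A}A\phi_0=\bigo(1)$, so the elementary Lipschitz estimate $\|\psi_0(s)-\psi_0(\tau c_k)\|\le C\tau$ suffices, while $\psi_R=\bigo(\tau)$ is handled by crude bounds. This is cleaner and, incidentally, never uses the $C^1$ hypothesis on $\widehat\psi$. The paper's Peano-kernel route is more elaborate here but serves as a template for Lemma~\ref{lemma : Quadrature Error lemma 2}, where a second derivative of $\psi$ is needed and your splitting no longer avoids the singularity (since $\psi_0''$ would involve $A^2\phi_0$, which is not controlled); there the interplay between the Peano weight and the parabolic smoothing becomes essential.
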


\begin{proof}
Since $\psi$ is uniformly bounded on $[0,\tau]$, the first result follows.
Let us assume that the condition~\eqref{eq:QuadratureCondition} is satisfied.

Let us compute the first derivative of $\psi$,
\begin{align*}
\psi'(s)&=-A\e^{(\tau - s)A}\widehat{\psi}(s)+\e^{(\tau - s)A}\widehat{\psi}'(s).
\end{align*}
Let $Q_l(\psi)=\tau\psi(0)$ be the left rectangle quadrature formula. We prove that every first order quadrature formula $Q$ satisfies
\begin{equation}\label{eq:QuadratureEquivalence}
Q_l(\psi)- Q(\psi)=\bigo(\tau^2).
\end{equation}
First, we observe that $\left\|\tau A\varphi_1(\tau A)\widehat{\psi}(0)\right\|=\bigo(\tau)$. Indeed
$$
\left\|\tau A\varphi_1(\tau A)\widehat{\psi}(0)\right\|\leq\tau\left\|\varphi_1(\tau A)\right\|\left\|A\phi_0\right\|+\left\|\tau A\varphi_1(\tau A)\right\|\left\|\bigo(\tau)\right\|\leq C\tau.
$$
We extend $\psi(c\tau)$ around $0$,
\begin{align*}
\psi(0)&=\e^{\tau A}\widehat{\psi}(0)=\widehat{\psi}(0)+\tau A\varphi_1(\tau A)\widehat{\psi}(0)=\widehat{\psi}(c\tau)+\bigo(\tau)=\psi(c\tau)+\bigo(\tau),
\end{align*}
which proves~\eqref{eq:QuadratureEquivalence} since
\begin{align*}
\tau\psi(0)-\tau\sum_{k=0}^m b_k\psi(\tau c_k)=\tau\psi(0)-\tau\psi(0)\sum_{k=1}^m b_k+\bigo(\tau^2)=\bigo(\tau^2).
\end{align*}
Therefore, we only need to show that 
\begin{equation}\label{eq : first order quadrature error}
\int_0^\tau \psi(s)ds - Q_l(\psi) =\bigo(\tau^2).
\end{equation}
We write the quadrature error as follows using the Peano kernel representation of the error for a first order quadrature formula,
\begin{align*}
\int_0^\tau \psi(s)ds - Q(\psi) = \tau^2\int_0^1(1-s)\psi'(\tau s)ds-\tau^2\sum_{i=1}^m b_i\int_0^{c_i}\psi'(\tau s)ds,
\end{align*}
which gives for $Q_l$,
$$
\int_0^\tau \psi(s)ds - Q_l(\psi) = \tau^2\int_0^1(1-s)\psi'(\tau s)ds.
$$
We need to bound the integral $\int_0^1(1-s)\psi'(\tau s)ds$.
We first bound $\psi'(s)$. For that, we need to bound $\|-A\e^{\tau(1 - s)A}\widehat{\psi}(\tau s)\|$. We get
\begin{align*}
\left\|-A\e^{\tau(1 - s)A}\widehat{\psi}(\tau s)\right\|\leq\left\|\e^{\tau(1 - s)A}\right\|\left\|-A\phi_0\right\|+\left\|-A\e^{\tau(1 - s)A}\right\|\left\|\bigo(\tau) \right\|\leq C+\frac{\tau C}{\tau (1-s)}.
\end{align*}
Therefore
$$
\left\|\psi'(\tau s)\right\|=\left\|-A\e^{\tau(1 - s)A}\widehat{\psi}(\tau s)+\e^{\tau (1- s)A}\widehat{\psi}'(\tau s) \right\| \leq C(1+\frac{1}{1-s}).
$$
We can now compute the error of the quadrature formula $Q_l$ and show~\eqref{eq : first order quadrature error}.
This follows from the inequality
\begin{align*}
\left\|\tau^2\int_0^1(1-s)\psi'(\tau s)ds\right\|\leq \tau^2\int_0^1(1-s)C(1+\frac{1}{1-s})ds=C\tau^2.
\end{align*}
Which gives us, with~\eqref{eq:QuadratureEquivalence}, the desired result for any first order quadrature formula,
$$
\int_0^\tau \psi(s)ds - Q(\psi) =\bigo(\tau^2),
$$
which concludes the proof of Lemma~\ref{lemma : Quadrature Error lemma}.
\end{proof}

\begin{lemma}\label{lemma : Quadrature Error lemma 2}
Let $Q$ and $\psi$ be as in Lemma \ref{lemma : Quadrature Error lemma}. We assume that $\widehat{\psi}$ is twice continuously differentiable and that $Q$ is a second order quadrature formula ($\sum_{k=1}^{m} c_kb_k =\frac{1}{2}$).
Then
\begin{equation}\label{eq : Quadature error estimate 1}
E =A\bigo(\tau^2)+\bigo(\tau^2).
\end{equation}
If, in addition, $\widehat{\psi}(s)=\phi_0+\bigo(\tau)$ with $\phi_0\in D(A)$ and $A\phi_0=\bigo(1)$, then   
\begin{equation}\label{eq : Quadature error estimate 2}
E =A\bigo(\tau^3)+\bigo(\tau^3).
\end{equation}
\end{lemma}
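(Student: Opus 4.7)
The plan is to follow the structure of the proof of Lemma~\ref{lemma : Quadrature Error lemma}, replacing the first-order Peano representation by a second-order one and then tracking separately an $A$-prefixed piece and a fully regular piece of $\psi''$. The main technical obstacle will be establishing a refined estimate on the Peano kernel that compensates for the boundary singularity of $A\e^{(\tau-r)A}$ at $r = \tau$.

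First I would use the Taylor identity $\psi(s) = \psi(0) + s\psi'(0) + \int_0^s (s-r)\psi''(r)\,dr$ together with the second-order conditions $\sum_k b_k = 1$ and $\sum_k b_k c_k = \tfrac{1}{2}$ to write the quadrature error as
$$
E = \int_0^\tau K(r)\psi''(r)\,dr, \qquad K(r) = \frac{(\tau-r)^2}{2} - \tau\sum_{k=1}^m b_k (\tau c_k - r)_+ .
$$
A short computation then yields $K(0) = K(\tau) = 0$ together with the uniform bound $|K(r)| \leq C\tau^2$ and, more importantly, the refined bound $|K(r)| \leq C\, r(\tau-r)$. The latter is the analogue of the vanishing at the endpoints used in the first-order case and follows from the explicit piecewise-polynomial form of $K$.

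Next I would decompose the second derivative as $\psi''(r) = A\Phi(r) + \Psi(r)$, where $\Phi(r) = A\e^{(\tau-r)A}\widehat{\psi}(r) - 2\e^{(\tau-r)A}\widehat{\psi}'(r)$ and $\Psi(r) = \e^{(\tau-r)A}\widehat{\psi}''(r)$, so that
$$
E = A\int_0^\tau K(r)\Phi(r)\,dr + \int_0^\tau K(r)\Psi(r)\,dr.
$$
For the bound~\eqref{eq : Quadature error estimate 1}, the $\Psi$-integral is $\bigo(\tau^3)$ from $|K(r)| \leq C\tau^2$ and the boundedness of $\widehat{\psi}''$. For the $\Phi$-integral the dangerous term is $\int_0^\tau K(r) A\e^{(\tau-r)A}\widehat{\psi}(r)\,dr$; combining the parabolic smoothing estimate $\|A\e^{(\tau-r)A}\| \leq C/(\tau-r)$ with the refined bound $|K(r)| \leq C r(\tau-r)$ leaves an integrand of size $Cr$, which integrates to $\bigo(\tau^2)$. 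The remaining contribution $-2\int_0^\tau K\e^{(\tau-r)A}\widehat{\psi}'\,dr$ is $\bigo(\tau^3)$ since $|K| \leq C\tau^2$ and $\widehat{\psi}'$ is bounded, giving $E = A\bigo(\tau^2) + \bigo(\tau^2)$.

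For the sharpened bound~\eqref{eq : Quadature error estimate 2}, the additional hypothesis is inserted by writing $A\e^{(\tau-r)A}\widehat{\psi}(r) = \e^{(\tau-r)A}A\phi_0 + A\e^{(\tau-r)A}(\widehat{\psi}(r)-\phi_0)$. On the $\phi_0$-piece the function $\e^{(\tau-r)A}A\phi_0$ is uniformly bounded, which combined with $|K(r)| \leq C\tau^2$ already produces $\bigo(\tau^3)$. On the remainder, the smallness $\widehat{\psi}(r) - \phi_0 = \bigo(\tau)$ supplies an extra factor $\tau$, so that together with $|K(r)| \leq C r(\tau-r)$ the integrand is bounded by $C r\tau$ and again integrates to $\bigo(\tau^3)$. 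Combined with the already $\bigo(\tau^3)$ contributions from $\widehat{\psi}'$ and from $\Psi$, this gives $\int_0^\tau K\Phi\,dr = \bigo(\tau^3)$, and hence $E = A\bigo(\tau^3) + \bigo(\tau^3)$, as claimed.
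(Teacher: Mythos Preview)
Your proof is correct and follows the same overall strategy as the paper: use the second--order Peano representation of the quadrature error, split $\psi'' = A\Phi + \Psi$ with $\Phi(r)=A\e^{(\tau-r)A}\widehat\psi(r)-2\e^{(\tau-r)A}\widehat\psi'(r)$ and $\Psi(r)=\e^{(\tau-r)A}\widehat\psi''(r)$, pull the outer $A$ through the integral, and absorb the $1/(\tau-r)$ singularity of $A\e^{(\tau-r)A}$ by a matching zero of the kernel. The difference is only in bookkeeping: the paper keeps the two Peano pieces $\tau^3\!\int_0^1\frac{(1-s)^2}{2}\psi''(\tau s)\,ds$ and $\tau^3\sum_i b_i\!\int_0^{c_i}(c_i-s)\psi''(\tau s)\,ds$ separate and treats the cases $c_i=1$ and $c_i\ne 1$ individually, whereas you collapse everything into a single kernel $K$ and exploit $K(0)=K(\tau)=0$ to obtain $|K(r)|\le Cr(\tau-r)$ directly. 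Your packaging is a little cleaner (no case distinction on the $c_i$) and makes the cancellation at the right endpoint explicit. Note only that $K(\tau)=0$ uses $c_k\le 1$, an implicit assumption that the paper also makes in its $c_i\ne 1$ case; you may wish to state it.
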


Using the notation~\eqref{eq : O Definition}, we remark that the term $A\bigo(\tau^k)$, where $k=2$ in~\eqref{eq : Quadature error estimate 1} and $k=3$ in~\eqref{eq : Quadature error estimate 2}, is a function of the form $A\phi$, where $\|\phi\|_{L^p(\Omega)}\leq C\tau^k$. In particular $\phi\in L^p(\Omega)$ is not in the domain of $A$ in general, and hence $A\phi\notin L^{p}(\Omega)$ in general and $A\phi\in W^{-2,p}(\Omega)$.
\begin{proof}
The second derivative of $\psi$ is 
\begin{align*}
\psi''(s)&=A^2\e^{(\tau - s)A}\widehat{\psi}(s)-2A\e^{(\tau - s)A}\widehat{\psi}'(s)+\e^{(\tau - s)A}\widehat{\psi}''(s).
\end{align*}
If $Q$ is a second order quadrature formula we write the quadrature error as follows using the Peano kernel representation of the error for a second order quadrature formula:
\begin{align*}
\int_0^\tau \psi(s)ds - Q(\psi) = \tau^3\int_0^1\frac{(1-s)^2}{2}\psi''(\tau s)ds-\tau^3\sum_{i=1}^m b_i\int_0^{c_i}(c_i-s)\psi''(\tau s)ds.
\end{align*}
It remains to estimate $$P_1(\tau s)=A\e^{\tau(1 - s)A}\widehat{\psi}(\tau s)-2\e^{\tau(1 - s)A}\widehat{\psi}'(\tau s)$$ and $$P_2(\tau s)=\e^{\tau(1 - s)A}\widehat{\psi}''(\tau s).$$
We first bound $\|A\e^{\tau( 1- s)A}\widehat{\psi}(\tau s)\|$. We get
\begin{align*}
\left\|A\e^{\tau( 1- s)A}\widehat{\psi}(\tau s)\right\|\leq\left\|A\e^{\tau(1 - s)A}\right\|\left\|\widehat{\psi}(\tau s)\right\|\leq C\frac{1}{\tau(1-s)}.
\end{align*}
Then
\begin{align}\label{eq : P1}
\left\|P_1(\tau s)\right\| = \left\|A\e^{\tau(1 - s)A}\widehat{\psi}(\tau s)-2\e^{\tau(1 - s)A}\widehat{\psi}'(\tau s)\right\|
\leq C(1+\frac{1}{\tau(1-s)})
\end{align}
and 
\begin{align*}
\left\|P_2(\tau s)\right\|=\left\|\e^{\tau(1 - s)A}\widehat{\psi}''(s)\right\|\leq \left\|\e^{\tau( 1- s)A}\right\|\left\|\widehat{\psi}''(s)\right\|\leq C.
\end{align*}
This gives the following estimation for the integrals
$\int_0^1\frac{(1-s)^2}{2}P_i(\tau s)ds$:
\begin{align*}
\left\|\int_0^1\frac{(1-s)^2}{2}P_1(\tau s)ds\right\|&\leq C\int_0^1\frac{(1-s)^2}{2}(1+\frac{1}{\tau(1-s)})ds\leq \frac{C}{\tau},
\end{align*}
\begin{align*}
\left\|\int_0^1\frac{(1-s)^2}{2}P_2(\tau s)ds\right\|&\leq\int_0^1\frac{(1-s)^2}{2}Cds\leq C.
\end{align*}
We show that $\|\sum_{i=1}^m b_i\int_0^{c_i}(c_i-s)P_1(\tau s)ds\|\leq \frac{C}{\tau}$.
\begin{align*}
\left\|\sum_{i=1}^m b_i\int_0^{c_i}(c_i-s)P_1(\tau s)ds\right\|&\leq \sum_{i=1}^m b_i\int_0^{c_i}(c_i-s)\|P_1(\tau s)\|ds\\
&\leq C\sum_{i=1}^m b_ic_i + C\sum_{i=1}^m b_i \int_0^{c_i}(c_i-s)\frac{1}{\tau(1-s)}ds.\\ 
\end{align*}
If $c_i=1$, we have
$$
\int_0^1(1-s)\frac{1}{\tau(1-s)}ds=\int_0^1\frac{1}{\tau} ds=\frac{C}{\tau}.
$$
If $c_i\neq 1$, then
$$
\int_0^{c_i}(c_i-s)\frac{1}{\tau(1-s)}ds=\frac{C}{\tau}.
$$
Therefore 
$$
\left\|\sum_{i=1}^m b_i\int_0^{c_i}(c_i-s)P_1(\tau s)ds\right\|\leq \frac{C}{\tau}.
$$
For the integral of $P_2$, we obtain
\begin{align*}
\left\|\sum_{i=1}^m b_i\int_0^{c_i}(c_i-s)P_2(\tau s)ds\right\|&\leq \sum_{i=1}^m b_i\int_0^{c_i}(c_i-s)\|P_2(\tau s)\|ds\leq C,
\end{align*}
which gives the desired bound for the error,
\begin{align*}
&\int_0^\tau \psi(s)ds - Q(\psi)=A\bigo(\tau^2)+\bigo(\tau^2).
\end{align*}

If condition~\eqref{eq:QuadratureCondition} is satisfied we can obtain a better bound for $\|A\e^{\tau( 1- s)A}\widehat{\psi}(\tau s)\|$ and thus also for $\|P_1(\tau s)\|$,
\begin{align*}
\|A\e^{\tau( 1- s)A}\widehat{\psi}(\tau s)\|
&\leq\|\e^{\tau(1 - s)A}\|\|A\phi_0\|+\|-A\e^{\tau(1 - s)A}\|\|\bigo(\tau)\|\leq C+C\tau\frac{1}{\tau(1-s)}.
\end{align*}
We obtain the following estimation for $P_1(\tau s)$,
\begin{align}\label{eq : P1 modified}
\|P_1(\tau s)\| = \|A\e^{\tau(1 - s)A}\widehat{\psi}(\tau s)-2\e^{\tau(1 - s)A}\widehat{\psi}'(\tau s)\|
\leq C(1+\frac{1}{1-s}),
\end{align}
which gives us the estimation $\|\sum_{i=1}^m b_i\int_0^{c_i}(c_i-s)P_1(\tau s)ds\|\leq C$.
Finally, we have the desired error bound
\begin{align*}
&\int_0^\tau \psi(s)ds - \tau Q_1(\psi(\tau s))=A\bigo(\tau^3)+\bigo(\tau^3),
\end{align*}
which concludes the proof of Lemma~\ref{lemma : Quadrature Error lemma 2}.
\end{proof}

Using Theorem~\ref{thm : fractional}, we can improve Lemma~\ref{lemma : Quadrature Error lemma 2} as follows.
\begin{lemma}\label{lemma : Quadrature Error lemma fractional}
Under the hypotheses of Lemma~\ref{lemma : Quadrature Error lemma 2}, there exists $\alpha>0$ such that
$$
\int_0^\tau \psi(s)ds - \tau Q(\psi(\tau s)) =(-A)^{1-\alpha}\bigo(\tau^2)+\bigo(\tau^2).
$$
If additionally condition~\eqref{eq:QuadratureCondition} is satisfied, then
$$
\int_0^\tau \psi(s)ds - \tau Q(\psi(\tau s)) =(-A)^{1-{\alpha}}\bigo(\tau^3)+\bigo(\tau^3).
$$
\end{lemma}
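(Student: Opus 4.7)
The proof strategy is to retrace the argument of Lemma~\ref{lemma : Quadrature Error lemma 2} while replacing the coarse factorization $\psi''=AP_1+P_2$ by a finer one in which only the fractional power $(-A)^{1-\alpha}$ is pulled out in front. The role of Theorem~\ref{thm : fractional} is to supply an $\alpha>0$ for which $\widehat{\psi}(\tau s)$ and $\widehat{\psi}'(\tau s)$ lie in $D((-A)^{\alpha})$, via the inclusion $W^{1,p}(\Omega)\subset D((-A)^{\alpha})$. This lets $(-A)^{\alpha}$ commute with the semigroup and be absorbed by $\widehat{\psi}$ without introducing an extra singular factor $(\tau(1-s))^{-\alpha}$ from parabolic smoothing.

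Concretely, starting from the Peano-kernel representation and the decomposition $\psi''(s)=AP_1(s)+P_2(s)$ of Lemma~\ref{lemma : Quadrature Error lemma 2}, I would write $AP_1=-(-A)^{1-\alpha}(-A)^{\alpha}P_1$ and estimate $(-A)^{\alpha}P_1(\tau s)$ term by term. For the leading piece the identity
\[
(-A)^{1+\alpha}e^{\tau(1-s)A}\widehat{\psi}(\tau s)=(-A)\,e^{\tau(1-s)A}\,(-A)^{\alpha}\widehat{\psi}(\tau s),
\]
combined with parabolic smoothing and the bound $\|(-A)^{\alpha}\widehat{\psi}(\tau s)\|\leq C$ supplied by Theorem~\ref{thm : fractional}, yields a norm $\leq C/(\tau(1-s))$; the lower-order piece similarly gives a uniform bound after transferring $(-A)^\alpha$ onto $\widehat{\psi}'(\tau s)$. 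One thus recovers exactly the estimate $\|(-A)^{\alpha}P_1(\tau s)\|\leq C(1+1/(\tau(1-s)))$ that played the role of $\|P_1(\tau s)\|$ in Lemma~\ref{lemma : Quadrature Error lemma 2}, and the Peano-kernel integration delivers $E=(-A)^{1-\alpha}\bigo(\tau^{2})+\bigo(\tau^{2})$. Under condition~\eqref{eq:QuadratureCondition}, the same reasoning combined with the splitting $\widehat{\psi}=\phi_0+\bigo(\tau)$, with $\phi_0\in D(A)\subset D((-A)^\alpha)$ and $\|A\phi_0\|\leq C$, sharpens the bound to $\|(-A)^{\alpha}P_1(\tau s)\|\leq C(1+1/(1-s))$ as in the second half of Lemma~\ref{lemma : Quadrature Error lemma 2}, and hence $E=(-A)^{1-\alpha}\bigo(\tau^{3})+\bigo(\tau^{3})$.

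The only genuinely new ingredient, and the main point that requires care, is the commutation of $(-A)^{\alpha}$ with $e^{\tau(1-s)A}$ and its transfer onto $\widehat{\psi}$: without Theorem~\ref{thm : fractional} one could only use parabolic smoothing to move $(-A)^{\alpha}$ past the semigroup, which would introduce an extra factor $(\tau(1-s))^{-\alpha}$ and degrade the quadrature error to $(-A)^{1-\alpha}\bigo(\tau^{2-\alpha})$. The fact that $D((-A)^{\alpha})$ is large enough to contain $W^{1,p}(\Omega)$ is precisely what makes it possible to trade the integer power of $A$ appearing in Lemma~\ref{lemma : Quadrature Error lemma 2} for $(-A)^{1-\alpha}$ with no loss in the $\tau$-order of the error.
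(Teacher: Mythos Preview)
Your proposal is correct and follows essentially the same approach as the paper: factor $AP_1=-(-A)^{1-\alpha}(-A)^{\alpha}P_1$, use Theorem~\ref{thm : fractional} to place $\widehat{\psi}(\tau s)$ and $\widehat{\psi}'(\tau s)$ in $D((-A)^{\alpha})$, commute $(-A)^{\alpha}$ with the semigroup, and recover the same bounds on $\|(-A)^{\alpha}P_1(\tau s)\|$ that $\|P_1(\tau s)\|$ satisfied in Lemma~\ref{lemma : Quadrature Error lemma 2}. Your write-up is in fact more explicit than the paper's, and your closing remark on why Theorem~\ref{thm : fractional} is indispensable (to avoid the extra $(\tau(1-s))^{-\alpha}$ from parabolic smoothing) is exactly the point.
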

\begin{proof}\label{rem : fractional}
We use Theorem~\ref{thm : fractional}, which states that for sufficiently small $\alpha > 0$, $W^{1,p}(\Omega)$ is included in the domain of $(-A)^\alpha$, which does not involve any condition on the boundary. One then obtains
\begin{align*}
\|(-A)^{1+\alpha}\e^{\tau( 1- s)A}\widehat{\psi}(\tau s)\|\leq\|A\e^{\tau(1 - s)A}\|\|(-A)^\alpha\widehat{\psi}(\tau s)\|\leq C\frac{1}{\tau(1-s)}
\end{align*}
and
$$
\|2(-A)^{\alpha}\e^{\tau(1 - s)A}\widehat{\psi}'(\tau s)\|\leq\|2\e^{\tau(1 - s)A}\|\|(-A)^{\alpha},\widehat{\psi}'(\tau s)\|\leq C
$$
which gives
\begin{align*}
\|P_1(\tau s)\| = \|(-A)^{1+\alpha}\e^{\tau(1 - s)A}\widehat{\psi}(\tau s)-2(-A)^{\alpha}\e^{\tau(1 - s)A}\widehat{\psi}'(\tau s)\|
\leq C\left(1+\frac{1}{\tau(1-s)}\right)
\end{align*}
instead of~\eqref{eq : P1}. Similarly, if condition~\eqref{eq:QuadratureCondition} is satisfied, one obtains
\begin{align*}
\|P_1(\tau s)\| = \|(-A)^{1+\alpha}\e^{\tau(1 - s)A}\widehat{\psi}(\tau s)-2(-A)^{\alpha}\e^{\tau(1 - s)A}\widehat{\psi}'(\tau s)\|
\leq C\left(1+\frac{1}{1-s}\right)
\end{align*}
instead of~\eqref{eq : P1 modified}, and this concludes the proof.
\end{proof}

\subsection{Order one error estimate for the five parts Strang splitting}
In this section, we prove that the five parts modified splitting~\eqref{eq : five parts Strang splitting} is of global order one because this is needed in the proof of the global order of the method. We start to give two estimations for the local error.
To perform our convergence analysis, we need an exact formula for~\eqref{eq : five parts Strang splitting}. We expand each flow that appears in the Strang splitting,
\begin{align*}
w_{n}&=\phi^f_{\frac{\tau}{2}}(u(t_n))= u(t_n)+\int_0^{\tau/2}f(\phi^{f}_{s}(u(t_n)))ds,\\
\tilde{w}_n&=\phi^{-q_n}_{\frac{\tau}{2}}(w_n)=u(t_n)+\int_0^{\tau/2}f(\phi^{f}_{s}(u(t_n)))-q_nds,\\
v_n&=\phi^{D+q_n}_{\frac{\tau}{2}}(\tilde{w}_n)=z_n(\tau)\!+\!\e^{\tau A}(\tilde{w}_{n}-z_n(0)) \!+\!\int_0^\tau \!\! \e^{(\tau - s)A}(q_n+Dz_n(s)-\partial_t z_n(s))ds, \\
\tilde{v}_n&=\phi^{-q_n}_{\frac{\tau}{2}}(v_n)=v_n-\int_0^{\tau/2}q_nds,\\
u_{n+1}&=\phi^{f}_{\frac{\tau}{2}}(\tilde{v}_n)=v_n+\int_0^{\tau/2}f(\phi^{f}_{s}(\tilde{v}_n))-q_nds.
\end{align*} 
We obtain the following exact formula for the numerical flow:
\begin{align}\label{eq : numerical solution}
S_\tau(u(t_n))&=z_n(\tau)+\e^{\tau A}\left(u(t_n)+\int_0^{\tau/2}\left(f(\phi^{f}_{s}(u(t_n)))-q_n\right)ds-z_n(0)\right)\nonumber\\
&+\int_0^\tau\e^{(\tau - s)A}(q_n+Dz_n(s)-\partial_t z_n(s))ds 
+\int_0^{\tau/2}\left(f(\phi^{f}_{s}(\tilde{v}_n))-q_n\right)ds.
\end{align}
We define the local error at time $t_{n+1}$, $\delta_{n+1}$, as follows:
$$
\delta_{n+1}:=\mathcal{S}_\tau u(t_n)-u(t_{n+1}).
$$
Using the formula~\eqref{eq:exact solution} of the exact solution and formula~\eqref{eq : numerical solution} of the numerical solution we obtain
\begin{align}\label{eq : local error}
&\delta_{n+1}=\e^{\tau A}\frac{1}{2}\int_0^{\tau}f(\phi^{f}_{s/2}(u(t_n)))-q_nds\nonumber\\
&+\int_0^\tau\e^{(\tau - s)A}(q_n-f(u(t_n+s)))ds+\frac{1}{2}\int_0^{\tau}f(\phi^{f}_{s/2}(\tilde{v}_n))-q_nds.
\end{align}
Since all the integrands are uniformly bounded on $[0,\tau]$, we obtain the following result, which states that the five parts modified Strang splitting~\eqref{eq : five parts Strang splitting} is locally of first order.
\begin{lemma}\label{thm : local error 0}
Under the assumption of section~\ref{section : framework}, the five parts modified Strang splitting~\eqref{eq : five parts Strang splitting} satisfies the following local error estimate: 
$$
\delta_{n+1}=\bigo(\tau).
$$
\end{lemma}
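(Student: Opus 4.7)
The plan is to start from the explicit representation~\eqref{eq : local error} of the local error, which decomposes $\delta_{n+1}$ as a sum of three Bochner integrals, each over an interval of length at most $\tau$. The strategy is to bound the $L^p(\Omega)$-norm of each integrand by a constant independent of $\tau$ and $n$, and then multiply by the length of the integration domain.

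First I would treat the term $\e^{\tau A}\frac{1}{2}\int_0^{\tau}(f(\phi^{f}_{s/2}(u(t_n)))-q_n)\,ds$. Using that $\{\e^{tA}\}_{t\in[0,T]}$ is a bounded analytic semigroup on $L^p(\Omega)$ (so $\|\e^{\tau A}\|\leq C$), it suffices to bound the integrand. Since $u\in C^2([0,T],W^{2,p}(\Omega))$ takes values in a bounded subset, and for $\tau$ small enough the flow $\phi^f_{s/2}(u(t_n))$ remains in the neighborhood $U$ where $f$ is $C^2$ with values in $W^{2,p}(\Omega)\hookrightarrow L^p(\Omega)$, the term $f(\phi^f_{s/2}(u(t_n)))$ is uniformly bounded. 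Together with the hypothesis $\|q_n\|_{L^p(\Omega)}\leq C$, this yields an $\bigo(\tau)$ estimate. The second term $\int_0^\tau\e^{(\tau - s)A}(q_n-f(u(t_n+s)))ds$ is handled identically: the integrand is bounded in $L^p$ by the regularity of $u$ and the bound on $q_n$, and $\e^{(\tau-s)A}$ is uniformly bounded for $s\in[0,\tau]$ (the $\alpha=0$ case of the parabolic smoothing property).

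The third integral $\frac{1}{2}\int_0^{\tau}(f(\phi^{f}_{s/2}(\tilde{v}_n))-q_n)\,ds$ is the only step requiring care and is the main obstacle: I must ensure that the input $\tilde{v}_n$ lies in the neighborhood $U$ of the exact solution, so that $\phi^f_{s/2}(\tilde{v}_n)$ and the value of $f$ there are well defined and bounded. To justify this, I would trace through the intermediate quantities $w_n$, $\tilde{w}_n$, $v_n$, $\tilde{v}_n$ given in the expansion preceding~\eqref{eq : numerical solution}, applying the triangle inequality together with the $W^{2,p}$-boundedness of $\e^{tA}$ on its domain, the assumption $q_n\in W^{2,p}(\Omega)$ with a uniform bound, and the identity $u(t_n)=z_n(0)+(u(t_n)-z_n(0))$ with $u(t_n)-z_n(0)\in D(A)$. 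A short bootstrap shows $\tilde{v}_n-u(t_n)=\bigo(\tau)$ in $W^{2,p}$ for $\tau$ small enough, so that $\tilde{v}_n\in U$ and the third integrand is again uniformly bounded in $L^p$. Summing the three $\bigo(\tau)$ contributions concludes the proof.
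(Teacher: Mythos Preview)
Your proposal is correct and follows essentially the same approach as the paper: bound each of the three integrands in~\eqref{eq : local error} uniformly in $L^p$ and multiply by the length $\tau$ of the integration interval. The paper's own proof is a single sentence (``Since all the integrands are uniformly bounded on $[0,\tau]$, we obtain the following result''); your version simply fills in the details, and you correctly flag that the third integrand requires $\tilde{v}_n$ to remain in the neighborhood $U$, a point the paper leaves implicit.

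One minor remark: in your bootstrap for $\tilde{v}_n-u(t_n)=\bigo(\tau)$ in $W^{2,p}$ you invoke a uniform $W^{2,p}$ bound on $q_n$, whereas the paper only explicitly assumes a uniform $L^p$ bound together with $q_n\in W^{2,p}(\Omega)$. This is not a flaw in your argument so much as an imprecision shared with the paper, which elsewhere (e.g.\ in the proof of Lemma~\ref{thm : local error 1}) also uses $\tilde{v}_n=u(t_n)+\bigo(\tau)$ without further comment.
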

We prove the next local error estimate we use in the theorem for the global error.
\begin{lemma}\label{thm : local error 1}
Under the assumption of section~\ref{section : framework}, the five parts modified Strang splitting~\eqref{eq : five parts Strang splitting} satisfies the following local error estimate,
$$
\delta_{n+1}=A\bigo(\tau^2)+\bigo(\tau^2).
$$
\end{lemma}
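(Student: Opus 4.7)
The plan is to exploit the explicit local error formula~\eqref{eq : local error}, writing $\delta_{n+1} = I_1 + I_2 + I_3$ where $I_1 = \e^{\tau A}\frac{1}{2}\int_0^\tau (f(\phi^f_{s/2}(u(t_n))) - q_n)\,ds$, $I_2 = \int_0^\tau \e^{(\tau-s)A}(q_n - f(u(t_n+s)))\,ds$, and $I_3 = \frac{1}{2}\int_0^\tau (f(\phi^f_{s/2}(\tilde v_n)) - q_n)\,ds$. The key observation to develop is that $I_1$ and $I_3$ serve, up to $\bigo(\tau^2)$, as the two endpoint contributions of the trapezoidal rule applied to $I_2$. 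Lemma~\ref{lemma : Quadrature Error lemma 2} will then supply the residual $A\bigo(\tau^2) + \bigo(\tau^2)$ bound, which is exactly the form of the conclusion.

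The first step is to apply Lemma~\ref{lemma : Quadrature Error lemma 2} to $I_2$ with the trapezoidal quadrature ($c_1 = 0$, $c_2 = 1$, $b_1 = b_2 = \tfrac12$) and $\widehat{\psi}(s) = q_n - f(u(t_n + s))$; this $\widehat{\psi}$ is twice continuously differentiable in $s$ with values in $W^{2,p}(\Omega)$ since $f \in C^2(U, W^{2,p})$ and $u \in C^2([0,T], W^{2,p})$. One then obtains $I_2 = \tfrac{\tau}{2}\e^{\tau A}(q_n - f(u(t_n))) + \tfrac{\tau}{2}(q_n - f(u(t_{n+1}))) + A\bigo(\tau^2) + \bigo(\tau^2)$. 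Next, with the change of variable $r = s/2$, one recognises $I_1 = \e^{\tau A}(\phi^f_{\tau/2}(u(t_n)) - u(t_n) - \tfrac{\tau}{2}q_n)$, and a Taylor expansion of $\phi^f_r(u(t_n))$ around $r = 0$ rewrites this as $\tfrac{\tau}{2}\e^{\tau A}(f(u(t_n)) - q_n) + \e^{\tau A}\bigo(\tau^2)$, so its leading contribution cancels exactly with the $s = 0$ endpoint of the trapezoidal approximation. The same expansion for $I_3$ yields $\tfrac{\tau}{2}(f(\tilde v_n) - q_n) + \bigo(\tau^2)$, leaving after the $s = \tau$ endpoint the residual $\tfrac{\tau}{2}(f(\tilde v_n) - f(u(t_{n+1}))) + \bigo(\tau^2)$.

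The hard part will be showing that $\tilde v_n - u(t_{n+1}) = \bigo(\tau)$, so that Lipschitz continuity of $f$ reduces this residual to $\bigo(\tau^2)$. Here the specific form of the corrector condition~\eqref{eq : q condition} becomes essential: it forces $\tfrac{\tau}{2}Bq_n = B\phi^f_{\tau/2}(u(t_n)) - b(t_n)$, hence $B\tilde w_n = b(t_n) = Bz_n(0)$, which places $\tilde w_n - z_n(0)$ inside $D(A)$ with a $W^{2,p}$-norm bounded uniformly in $\tau$. Expanding via the variation of constants formula, $v_n - \tilde w_n = (z_n(\tau) - z_n(0)) + (\e^{\tau A} - I)(\tilde w_n - z_n(0)) + \int_0^\tau \e^{(\tau-s)A}(q_n + Dz_n(s) - \partial_t z_n(s))\,ds$; the crucial middle term equals $\tau \varphi_1(\tau A) A(\tilde w_n - z_n(0))$, which is $\bigo(\tau)$ by the domain membership just established and the boundedness of $\varphi_1(\tau A)$, while the remaining contributions are $\bigo(\tau)$ by $C^1$ regularity of $z_n$ and uniform boundedness of $q_n$. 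Consequently $\tilde v_n = v_n - \tfrac{\tau}{2}q_n = u(t_n) + \bigo(\tau) = u(t_{n+1}) + \bigo(\tau)$, and assembling all the pieces produces $\delta_{n+1} = A\bigo(\tau^2) + \bigo(\tau^2)$ as claimed.
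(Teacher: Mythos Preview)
Your overall strategy matches the paper's: apply the trapezoidal rule (via Lemma~\ref{lemma : Quadrature Error lemma 2}) to $I_2$, reduce $I_1$ and $I_3$ to their leading $\tfrac{\tau}{2}$-terms, cancel, and be left with $\tfrac{\tau}{2}(f(\tilde v_n)-f(u(t_{n+1})))$, which vanishes to $\bigo(\tau^2)$ once $\tilde v_n = u(t_n)+\bigo(\tau)$.

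There is, however, a genuine gap in your justification of $\tilde v_n = u(t_n)+\bigo(\tau)$. You write that condition~\eqref{eq : q condition} ``forces $\tfrac{\tau}{2}Bq_n = B\phi^f_{\tau/2}(u(t_n)) - b(t_n)$''. It does not: \eqref{eq : q condition} reads $\tfrac{\tau}{2}Bq_n = B\phi^f_{\tau/2}(u_n) - Bu_n$, with the \emph{numerical} approximation $u_n$, not $u(t_n)$. In the local error one applies $\mathcal{S}_\tau$ to $u(t_n)$ while keeping the same $q_n$ (this is how the paper sets up the telescoping in Proposition~\ref{thm : global error one log}), so $B\tilde w_n = B\phi^f_{\tau/2}(u(t_n)) - \tfrac{\tau}{2}Bq_n$ is in general \emph{not} equal to $b(t_n)$, and your domain argument for $\tilde w_n - z_n(0)$ fails. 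More importantly, invoking the corrector condition here would make the argument circular: Lemma~\ref{thm : local error 1} feeds into Proposition~\ref{thm : global order one}, and only \emph{after} that can Lemmas~\ref{lemma : projection 1}--\ref{lemma : projection 2} exploit~\eqref{eq : q condition}.

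The fix is both simpler and independent of $q_n$. Write $\tilde w_n - z_n(0) = (u(t_n) - z_n(0)) + \bigo(\tau)$; since $Bu(t_n) = b(t_n) = Bz_n(0)$, the first term lies in $D(A)$ with $A(u(t_n)-z_n(0))=\bigo(1)$, so $\e^{\tau A}(u(t_n)-z_n(0)) = (u(t_n)-z_n(0)) + \bigo(\tau)$, while $\e^{\tau A}\bigo(\tau)=\bigo(\tau)$ by boundedness of the semigroup. Hence $v_n = u(t_n)+\bigo(\tau)$ and $\tilde v_n = u(t_n)+\bigo(\tau)$. Equivalently, one can simply cite Lemma~\ref{thm : local error 0}: $\mathcal{S}_\tau(u(t_n)) = \phi^f_{\tau/2}(\tilde v_n) = \tilde v_n + \bigo(\tau)$ and $\mathcal{S}_\tau(u(t_n)) = u(t_{n+1}) + \bigo(\tau)$ give $\tilde v_n = u(t_n)+\bigo(\tau)$ directly. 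This is exactly how the paper proceeds, asserting $\tilde v_n = u(t_n)+\bigo(\tau)$ without any appeal to~\eqref{eq : q condition}.
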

\begin{proof}
In formula \eqref{eq : local error} of the local error, we use the trapezoidal quadrature formula to approximate the integrals. By Lemma~\ref{lemma : Quadrature Error lemma 2}, the quadrature error made to approximate $\int_0^\tau\e^{(\tau - s)A}(q_n-f(u(t_n+s)))ds$ is equal to $A\bigo(\tau^2)+\bigo(\tau^2)$.
We get
\begin{align*}
&\delta_{n+1}=\e^{\tau A}\frac{\tau}{4} \left( f(u(t_n))-q_n+f(\phi^{f}_{\tau/2}(u(t_n)))-q_n\right)\\ 
&+\frac{\tau}{2}\left(\e^{\tau A}(q_n-f(u(t_n)))+q_n-f(u(t_n+\tau)))\right)\\
&+\frac{\tau}{4} \left( f(\tilde{v}_n)-q_n+f(\phi^{f}_{\tau/2}(\tilde{v}_n))-q_n\right)+A\bigo(\tau^2)+\bigo(\tau^2).
\end{align*}
Since $\phi^{f}_{\tau/2}(u(t_n))=u(t_n)+\bigo(\tau)$, $f(u(t_n+\tau))=f(u(t_n))+\bigo(\tau)$ and $\tilde{v}_n=u(t_n)+\bigo(\tau)$, and expending $\e^{\tau A}=Id+\tau A \varphi_1(\tau A)$, we obtain
\begin{align*}
&\delta_{n+1}=\frac{\tau}{2} \left( f(u(t_n))-q_n\right) +\tau(q_n-f(u(t_n)))+\frac{\tau}{2} \left( f(u(t_n))-q_n\right)\\
&+\frac{\tau^2}{2}A\varphi_1(\tau A)\left( f(u(t_n))-q_n\right)+\frac{\tau^2}{2}A\varphi_1(\tau A)(q_n-f(u(t_n))+A\bigo(\tau^2)+\bigo(\tau^2)\\
&=A\bigo(\tau^2)+\bigo(\tau^2),
\end{align*}
which concludes the proof.
\end{proof}
Using Theorem~\ref{thm : fractional} and Lemma~\ref{lemma : Quadrature Error lemma fractional}, we can improve Lemma~\ref{thm : local error 1} as follows.
\begin{lemma}\label{lemma : local order fractional 1}
Under the assumption of section~\ref{section : framework}, the five parts modified Strang splitting~\eqref{eq : five parts Strang splitting} satisfies the following. There exists $\alpha>0$ such that
$$
\delta_{n+1}=(-A)^{1-{\alpha}}\bigo(\tau^2)+\bigo(\tau^2).
$$
\end{lemma}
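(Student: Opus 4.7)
The plan is to replay the proof of Lemma~\ref{thm : local error 1} almost verbatim, substituting the sharper Lemma~\ref{lemma : Quadrature Error lemma fractional} at the single step where Lemma~\ref{lemma : Quadrature Error lemma 2} was previously used. Starting from the exact local error formula~\eqref{eq : local error}, I would apply the trapezoidal rule to all three integrals. The two outer integrals $\tfrac{1}{2}\int_0^\tau [f(\phi^f_{s/2}(u(t_n))) - q_n]\,ds$ and $\tfrac{1}{2}\int_0^\tau [f(\phi^f_{s/2}(\tilde v_n)) - q_n]\,ds$ carry no semigroup, so their integrands are $C^2$-smooth functions of $s$ with values in $L^p(\Omega)$, and the trapezoidal remainder is directly $\bigo(\tau^3)$ in the $L^p$-norm.

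For the middle integral $\int_0^\tau \e^{(\tau-s)A}(q_n - f(u(t_n+s)))\,ds$ I would invoke Lemma~\ref{lemma : Quadrature Error lemma fractional} with $\widehat{\psi}(s) = q_n - f(u(t_n+s))$, which yields a quadrature remainder of the form $(-A)^{1-\alpha}\bigo(\tau^2) + \bigo(\tau^2)$ in place of the coarser $A\bigo(\tau^2)+\bigo(\tau^2)$ of Lemma~\ref{lemma : Quadrature Error lemma 2}. The hypotheses of Lemma~\ref{lemma : Quadrature Error lemma fractional} are inherited from the analytical framework of section~\ref{section : framework}: $q_n \in W^{2,p}(\Omega)$ is uniformly bounded, $f \in C^2(U, W^{2,p}(\Omega))$, and $u \in C^2([0,T], W^{2,p}(\Omega))$, so $\widehat{\psi}$ is continuously differentiable with values in $W^{1,p}(\Omega)$, which is contained in $D((-A)^\alpha)$ by Theorem~\ref{thm : fractional}.

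It then remains to put the pieces back together. Expanding $\e^{\tau A} = \mathrm{Id} + \tau A\varphi_1(\tau A)$ in the prefactor of the first outer integral and in the trapezoidal evaluation of the middle integral, and using $\phi^f_{\tau/2}(u(t_n)) = u(t_n) + \bigo(\tau)$, $f(u(t_n+\tau)) = f(u(t_n)) + \bigo(\tau)$, and $\tilde v_n = u(t_n) + \bigo(\tau)$, the leading order $\tau$-terms combine to $\tfrac{\tau}{2}(f(u(t_n))-q_n) + \tau(q_n - f(u(t_n))) + \tfrac{\tau}{2}(f(u(t_n))-q_n) = 0$, while the two $\tfrac{\tau^2}{2} A\varphi_1(\tau A)(f(u(t_n))-q_n)$ contributions arising respectively from the first outer integral and the trapezoidal quadrature of the middle integral cancel by opposite signs, exactly as in the proof of Lemma~\ref{thm : local error 1}. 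What is left is either already $\bigo(\tau^2)$ in $L^p$ or the new $(-A)^{1-\alpha}\bigo(\tau^2)$ quadrature remainder, which is the claim.

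The only subtle point is to check that the cancellations of Lemma~\ref{thm : local error 1} are not disturbed by passage to the fractional exponent. They are not, because they are purely algebraic cancellations at the leading $\tau$ and $\tau^2$ levels and do not involve any use of the unbounded part of the operator. In effect, the only new ingredient compared with Lemma~\ref{thm : local error 1} is Theorem~\ref{thm : fractional}, entering solely through the upgraded quadrature bound.
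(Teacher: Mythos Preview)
Your argument is correct and follows essentially the same route as the paper: replay the proof of Lemma~\ref{thm : local error 1}, replacing Lemma~\ref{lemma : Quadrature Error lemma 2} by Lemma~\ref{lemma : Quadrature Error lemma fractional} for the middle integral. The one cosmetic difference is that you dispose of the two $\tfrac{\tau^2}{2}A\varphi_1(\tau A)(f(u(t_n))-q_n)$ and $\tfrac{\tau^2}{2}A\varphi_1(\tau A)(q_n-f(u(t_n)))$ contributions by exact algebraic cancellation, whereas the paper bounds each of them separately via Theorem~\ref{thm : fractional}, writing $\|A\varphi_1(\tau A)(f(u(t_n))-q_n)\|\le \|(-A)^{1-\alpha}\varphi_1(\tau A)\|\,\|(-A)^{\alpha}(f(u(t_n))-q_n)\|$; your observation that the sum is identically zero is the cleaner route and makes Theorem~\ref{thm : fractional} enter, as you say, only through Lemma~\ref{lemma : Quadrature Error lemma fractional}.
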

\begin{proof}
Using Lemma~\ref{lemma : Quadrature Error lemma fractional}, we can obtain that the quadrature error made to approximate $\int_0^\tau\e^{(\tau - s)A}(q_n-f(u(t_n+s)))ds$ is equal to $(-A)^{1-{\alpha}}\bigo(\tau^2)+\bigo(\tau^2)$.
We then use Theorem~\ref{thm : fractional} to bound $\frac{\tau^2}{2}A\varphi_1(\tau A)\left( f(u(t_n))-q_n\right)+\frac{\tau^2}{2}A\varphi_1(\tau A)(q_n-f(u(t_n))$. We obtain
\begin{align*}
\|A\varphi_1(\tau A)\left( f(u(t_n))-q_n\right)\|\leq\|(-A)^{1-\alpha}\varphi_1(\tau A)\|\|(-A)^{\alpha} (f(u(t_n))-q_n)\|.
\end{align*}
This gives us the desired result.
\end{proof}
Using the previous results for the local error, we can now prove the following order estimation for the global error.
\begin{proposition}\label{thm : global error one log}
Under the assumptions of section~\ref{section : framework}, the five parts modified Strang splitting~\eqref{eq : five parts Strang splitting} satisfies 
$$\|u_n-u(t_n)\|\leq C\tau(1+|\log\tau|) \quad 0\leq n\tau \leq T.$$
The constant C depends on T but is independent on $\tau$ and n.
\end{proposition}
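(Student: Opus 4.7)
The plan is a Lady Windermere's fan argument using the telescoping decomposition
\begin{equation*}
u_n - u(t_n) = \sum_{k=0}^{n-1} \bigl( \mathcal{S}_\tau^{n-k-1}(u(t_{k+1}) + \delta_{k+1}) - \mathcal{S}_\tau^{n-k-1}(u(t_{k+1})) \bigr),
\end{equation*}
where $\delta_{k+1} = \mathcal{S}_\tau u(t_k) - u(t_{k+1})$ is the local error studied in Lemmas~\ref{thm : local error 0} and~\ref{thm : local error 1}. The idea is to propagate each $\delta_{k+1}$ through the discrete flow and exploit the parabolic smoothing property $\|A\e^{tA}\|\leq C/t$ from section~\ref{section : framework} to absorb the unbounded factor $A$ appearing in the local error.

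First, I would establish a short-time stability estimate for the numerical flow: since the nonlinear components $\phi^f_{\tau/2}$ and $\phi^{-q_n}_{\tau/2}$ in~\eqref{eq : five parts Strang splitting} are Lipschitz with constant $1+\bigo(\tau)$ while the parabolic component $\phi^{D+q_n}_\tau$ carries the linear propagator $\e^{\tau A}$, a discrete Gronwall argument yields that propagation of a perturbation over $m\leq N$ steps is, up to bounded multiplicative corrections, governed by the composed semigroup $\e^{m\tau A}$. Applying this to the decomposition $\delta_{k+1} = A\phi_{k+1} + \rho_{k+1}$ from Lemma~\ref{thm : local error 1}, with $\|\phi_{k+1}\|,\|\rho_{k+1}\|\leq C\tau^2$, the unbounded contribution for $k = 0,\ldots,n-2$ is controlled by
\begin{equation*}
\|\mathcal{S}_\tau^{n-k-1}(A\phi_{k+1})\| \leq \frac{C\|\phi_{k+1}\|}{(n-k-1)\tau} \leq \frac{C\tau}{n-k-1}.
\end{equation*}
For the boundary term $k=n-1$ no smoothing is available (the propagator is the identity), so I would fall back on the coarser bound $\|\delta_n\| = \bigo(\tau)$ from Lemma~\ref{thm : local error 0}. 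The $A\phi_{k+1}$ contributions sum to the harmonic sum
\begin{equation*}
\sum_{k=0}^{n-2}\frac{C\tau}{n-k-1} = C\tau\sum_{j=1}^{n-1}\frac{1}{j} \leq C\tau(1+\log n) \leq C\tau(1+|\log\tau|),
\end{equation*}
while the $\rho_{k+1}$ parts contribute $n\cdot\bigo(\tau^2)=\bigo(\tau)$, producing the claimed estimate.

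The main obstacle I anticipate is the stability step in the presence of the unbounded operator $A$ concealed in the local error: propagating $A\phi_{k+1}$ through the nonlinear discrete flow $\mathcal{S}_\tau^{n-k-1}$ without destroying the smoothing of the composed semigroup $\e^{(n-k-1)\tau A}$ requires an expansion of the form $\mathcal{S}_\tau = \e^{\tau A} + \mathcal{R}_\tau$ with a remainder of lower order that stays in the appropriate subspace, together with careful iteration and commutation of $\mathcal{R}_\tau$ past the semigroup. Once this linearization is made rigorous, the harmonic-sum bound $\sum_{j=1}^{n}1/j\leq 1+\log n$ and the summation of the regular parts are standard.
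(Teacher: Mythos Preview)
Your overall strategy---split the local error as $\delta_{k+1}=A\phi_{k+1}+\rho_{k+1}$ via Lemma~\ref{thm : local error 1}, absorb the $A$ by parabolic smoothing of the propagator, sum the resulting harmonic series, treat the last step with Lemma~\ref{thm : local error 0}, and close with a discrete Gronwall inequality---is exactly the paper's route, and the arithmetic you sketch for the harmonic sum and the regular part is the same.

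The one substantive difference is how the linearization step you flag as the ``main obstacle'' is carried out. You set up a Lady Windermere's fan and propose propagating $\delta_{k+1}$ through the \emph{nonlinear} iterates $\mathcal{S}_\tau^{n-k-1}$, which then forces you to expand $\mathcal{S}_\tau=\e^{\tau A}+\mathcal{R}_\tau$ and commute the remainder past the semigroup. The paper sidesteps this entirely: using the explicit formula~\eqref{eq : numerical solution} for $\mathcal{S}_\tau$, one writes the \emph{one-step} identity
\[
e_{n+1}=\e^{\tau A}e_n + E(u_n,u(t_n)) + \delta_{n+1},\qquad \|E(u_n,u(t_n))\|\leq C\tau\|e_n\|,
\]
where $E$ collects the nonlinear differences and is bounded by Lipschitz continuity of $f$ and $\phi^f_s$. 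Iterating this recursion gives
\[
e_n=\sum_{k=0}^{n-1}\e^{(n-k-1)\tau A}\delta_{k+1}+\sum_{k=0}^{n-1}\e^{(n-k-1)\tau A}E(u_k,u(t_k)),
\]
so the local errors are propagated by the \emph{linear} semigroup from the outset---no commutation or iterated expansion is needed. The $\delta$-sum yields the harmonic bound and the $E$-sum feeds directly into Gronwall. Your approach would work, but the paper's formulation dissolves the obstacle you anticipate rather than confronting it.
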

\begin{proof}
The global error is defined as $e_n=u_n-u(t_n)$. 
$$
e_{n+1}=\mathcal{S}_{\tau}u_n-u(t_{n+1})=\mathcal{S}_{\tau}u_n-\mathcal{S}_{\tau}u(t_n)+\mathcal{S}_{\tau}u(t_n)-u(t_{n+1})=\mathcal{S}_{\tau}u_n-\mathcal{S}_{\tau}u(t_n)+\delta_{n+1}.
$$
Using the exact formula~\eqref{eq : numerical solution} for $\mathcal{S}_{\tau}u_n$ and $\mathcal{S}_{\tau}u(t_n)$, we obtain for $e_{n+1}$,
\begin{align*}
e_{n+1}&=\e^{\tau A} e_n + E(u_n,u(t_n))+\delta_{n+1},
\end{align*}
with 
\begin{align}\label{eq : E} 
E(u_n,u(t_n))&=\e^{\tau A}\int_0^{\frac{\tau}{2}}(f(\phi_s^f(u_n))-f(\phi_s^f(u(t_n))))ds\nonumber\\ 
&+\int_0^{\frac{\tau}{2}}f(\phi_{s}^f\circ\phi_{\frac{\tau}{2}}^{-q_n}\circ\phi_\tau^{D+q_n}\circ\phi_{\frac{\tau}{2}}^{-q_n}\circ\phi_{\frac{\tau}{2}}^f(u_n))ds\nonumber\\
&-\int_0^{\frac{\tau}{2}}f(\phi_{s}^f\circ\phi_{\frac{\tau}{2}}^{-q_n}\circ\phi_\tau^{D+q_n}\circ\phi_{\frac{\tau}{2}}^{-q_n}\circ\phi_{\frac{\tau}{2}}^f(u(t_n)) ) ds.
\end{align}
Let us bound $E(u(t_n),u_n)$. We use the Lipschitz continuity of $f$ and $\phi_s^f$.
For the first integral in~\eqref{eq : E}, we have
\begin{align*}
\left\|\int_0^{\frac{\tau}{2}}(f(\phi_s^f(u_n))-f(\phi_s^f(u(t_n))))ds\right\|&\leq C\tau \|e_n\|.
\end{align*}
For the second integral that appears in~\eqref{eq : E}, we observe that
$$
\phi_s^{q_n}(u)-\phi_s^{q_n}(v)=u-sq_n-v+sq_n=u-v.
$$
We obtain
\begin{align*}
&\left\|\int_0^{\frac{\tau}{2}}f(\phi^f_s\circ\phi_\frac{\tau}{2}^{-q_n}\circ\phi_\tau^{D+q_n}\circ\phi_{\frac{\tau}{2}}^{-q_n}\circ\phi_{\frac{\tau}{2}}^f(u_n)))-f(\phi^f_s\circ\phi_\frac{\tau}{2}^{-q_n}\circ\phi_\tau^{D+q_n}\circ\phi_{\frac{\tau}{2}}^{-q_n}\circ\phi_{\frac{\tau}{2}}^f(u(t_n) ) ds\right\|\\
&\leq C\tau\left\|\phi_\tau^{D+q_n}\circ\phi_{\frac{\tau}{2}}^{-q_n}\circ\phi_{\frac{\tau}{2}}^f(u_n)-\phi_\tau^{D+q_n}\circ\phi_{\frac{\tau}{2}}^{-q_n}\circ\phi_{\frac{\tau}{2}}^f(u(t_n)) \right\|.
\end{align*}
Writing the exact formula for $\phi_\tau^{D+q_n}\circ\phi_{\frac{\tau}{2}}^{-q_n}\circ\phi_{\frac{\tau}{2}}^f$, we have
\begin{align*}
&C\tau\left\|\phi_\tau^{D+q_n}\circ\phi_{\frac{\tau}{2}}^{-q_n}\circ\phi_{\frac{\tau}{2}}^f(u_n)-\phi_\tau^{D+q_n}\circ\phi_{\frac{\tau}{2}}^{-q_n}\circ\phi_{\frac{\tau}{2}}^f(u(t_n))\right\|\\
&=C\tau\left\| \e^{\tau A}(u_n-u(t_n))+\e^{\tau A}\int_0^{\tau/2}f(\phi^{f}_{s}(u_n))-f(\phi^{f}_{s}(u(t_n))))ds \right\|\\
&\leq C\tau \|e_n\|.
\end{align*}
Therefore
$$
\|E(u_n,u
(t_n))\|\leq C\tau \|e_n\|.
$$
The global error $e_n$ satisfies the following recursive formula:
$$
e_n=\e^{n\tau A}e_0+\sum_{k=0}^{n-1}\e^{(n-k-1)\tau A}\delta_{k+1}+\sum_{k=0}^{n-1}\e^{(n-k-1)\tau A}E(u_k,u(t_k)).
$$
This gives us, thanks to to previous estimation for $\|E(u_k,u(t_k))\|$ and $\|\delta_{k}\|$ and since $\|e_0\|=0$,
\begin{align*}
&\|e_n\|\leq \|\e^{n\tau A}\|\|e_0\|+\sum_{k=0}^{n-1}\|\e^{(n-k-1)\tau A}\delta_{k+1}\|+\sum_{k=0}^{n-1}\|\e^{(n-k-1)\tau A}\|\|E(u_k,u(t_k))\|\\
&\leq \sum_{k=0}^{n-2}\|\e^{(n-k-1)\tau A}(A\bigo(\tau^2)+\bigo(\tau^2))\|+\|\delta_n\|+C\tau\sum_{k=0}^{n-1}\|e_k\|.
\end{align*}
Since, by Lemma~\ref{thm : local error 0}, $\|\delta_n\|=\bigo(\tau)$ and using the parabolic smoothing property, we get
\begin{align*}
&\|e_n\|\leq C\tau^2\sum_{k=0}^{n-2}\frac{1}{(n-k-1)\tau}+nC\tau^2+C\tau+C\tau\sum_{k=0}^{n-1}\|e_k\|.
\end{align*}
We rearrange the second sum and observe that $nC\tau^2=C\tau $, which gives
\begin{align*}
&\|e_n\|\leq C\tau\sum_{k=0}^{n-1}\|e_k\|+C\tau^2\sum_{k=1}^{n-1}\frac{1}{k\tau}+C\tau.
\end{align*}
The second sum can be bounded as,
\begin{align*}
C\tau^2\sum_{k=1}^{n-1}\frac{1}{k\tau}&\leq C\tau\int_\tau^{n\tau}\frac{1}{x}dx\leq C\tau\int_\tau^{T}\frac{1}{x}dx
\leq C\tau(1+|\log(\tau)|).
\end{align*}
Using the discrete Gronwall's lemma we obtain the desired result:
\begin{align*}
\|e_n\|&\leq  C\tau(1+|\log(\tau)|)\e^{(n-1)C\tau}\leq C\tau(1+|\log(\tau)|)\e^{CT}=C\tau(1+|\log(\tau)|),
\end{align*}
which concludes the proof.
\end{proof}
Using Theorem~\ref{thm : fractional} and Proposition~\ref{thm : global error one log} we are now in position to prove Proposition~\ref{thm : global order one}, which provides a first order estimation for the global error.
\begin{proof}[Proof of Proposition~\ref{thm : global order one}]
We use Lemma~\ref{lemma : Quadrature Error lemma fractional} to remove the term $\log(\tau)$ in the global error estimate. Indeed, we obtain, 
\begin{align*}
&\|e_n\|\leq C\tau\sum_{k=0}^{n-1}\|e_k\|+C\tau^2\sum_{k=1}^{n-1}\frac{1}{k\tau^{1-\alpha}}+C\tau.
\end{align*}
We then estimate 
\begin{align*}
C\tau^2\sum_{k=1}^{n-1}\frac{1}{k\tau^{1-\alpha}}&\leq C\tau\int_\tau^{n\tau}\frac{1}{x^{1-\alpha}}dx\leq C\tau\int_\tau^{T}\frac{1}{x^{1-\alpha}}dx=C\tau(T^\alpha-\tau^{\alpha})\leq C \tau,
\end{align*}
which concludes the proof.
\end{proof}
\begin{remark}
In Lemma~\ref{lemma : projection 1}, to show that a function $q_n$ satisfying the boundary condition~\eqref{eq : q condition} or~\eqref{eq : q condition 2} satisfies the condition~\eqref{eq:QuadratureCondition}, we need to use Proposition~\ref{thm : global order one}, which we prove using Theorem~\ref{thm : fractional}. To prove $\|u_n-u(t_n)\|\leq C\tau^2(1+|\log\tau|)$ without using Theorem~\ref{thm : fractional}, we need a weaker condition that $q_n$ must satisfy, for example, $f(u(t_n+s))-q_n=\phi_n+\bigo(\tau(1+|\log(\tau)|))$ with $\phi_n\in D(A)$ and $A\phi_n=\bigo(1)$, instead of~\eqref{eq:QuadratureCondition}. We can then prove, with the help of the bound, $\|u_n-u(t_n)\|\leq C\tau(1+|\log\tau|)$, that a function satisfying~\eqref{eq : q condition} satisfies this new condition. We decide not to follow this approach as we think it simplifies our arguments to only have condition~\eqref{eq:QuadratureCondition} throughout the paper. 
\end{remark}
\subsection{Analysis of the corrector function}
We show that the conditions~\eqref{eq : q condition} and~\eqref{eq : q condition 2} for $q_n$ are properly chosen, that is $\widehat{\psi}(s)=q_n-f(u(t_n+s))$ satisfies the hypothesis~\eqref{eq:QuadratureCondition} when one of those conditions is satisfied.
For that purpose, we use Proposition~\ref{thm : global order one}, which states that $u(t_n)-u_n=\bigo(\tau)$. We stress that no condition on $q_n$ is required in the proof of Proposition~\ref{thm : global order one}. We first consider the boundary condition~\eqref{eq : q condition} for $q_n$. 
\begin{lemma}\label{lemma : projection 1}
Let $q_n$ be chosen such that~\eqref{eq : q condition} is satisfied,
\begin{align*}
Bq_n&=\frac{2}{\tau}(B\phi^f_{\tau/2}(u_n)-B(u_n)).
\end{align*}
Let $\widehat{\psi}(s)=q_n-f(u(t_n+s))$.
Then $\widehat{\psi}(s)=\phi_n+\bigo(\tau)$ with $\phi_n\in D(A)$ and $A\phi_n=\bigo(1)$.
\end{lemma}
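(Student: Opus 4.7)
The plan is to split
$$
\widehat{\psi}(s)=\bigl(q_n-f(u(t_n))\bigr)+\bigl(f(u(t_n))-f(u(t_n+s))\bigr),
$$
and to observe immediately that the second summand is $\mathcal{O}(\tau)$ in $L^p(\Omega)$, since $u\in C^1([0,T],W^{2,p}(\Omega))$ and $f\in C^1$. It therefore suffices to write $q_n-f(u(t_n))=\phi_n+r_n$ with $\phi_n\in D(A)$, $A\phi_n=\mathcal{O}(1)$, and $r_n=\mathcal{O}(\tau)$ in $L^p(\Omega)$.

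To produce this decomposition I would use a lifting argument. Fix a bounded right inverse $\mathcal{R}$ of the trace map $B\colon W^{2,p}(\Omega)\to W^{1-1/p,p}(\partial\Omega)$ (which exists under the regularity and non-tangentiality assumptions of Section~\ref{section : framework}), and set
$$
r_n := \mathcal{R}\bigl(B(q_n-f(u(t_n)))\bigr),\qquad \phi_n := q_n-f(u(t_n))-r_n.
$$
Then $B\phi_n=0$ on $\partial\Omega$, so $\phi_n\in D(A)$, and $A\phi_n$ is bounded in $L^p(\Omega)$ because $q_n$, $f(u(t_n))$, and $r_n$ are uniformly bounded in $W^{2,p}(\Omega)$ (using the standing hypothesis on $q_n$ and $f\in C^2(U,W^{2,p}(\Omega))$ with $u\in C([0,T],W^{2,p}(\Omega))$). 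The continuity of $\mathcal{R}$ then reduces everything to the boundary estimate
$$
\bigl\|B(q_n-f(u(t_n)))\bigr\|_{W^{1-1/p,p}(\partial\Omega)}=\mathcal{O}(\tau),
$$
which gives $\|r_n\|_{W^{2,p}(\Omega)}=\mathcal{O}(\tau)$ and in particular $\|r_n\|_{L^p(\Omega)}=\mathcal{O}(\tau)$.

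To establish that boundary bound, I would use the hypothesis \eqref{eq : q condition} to decompose
$$
Bq_n-Bf(u(t_n))=\Bigl[\tfrac{2}{\tau}\bigl(B\phi^f_{\tau/2}(u_n)-Bu_n\bigr)-Bf(u_n)\Bigr]+B\bigl(f(u_n)-f(u(t_n))\bigr).
$$
The bracketed term is $\mathcal{O}(\tau)$ on $\partial\Omega$ by Taylor expansion in $W^{2,p}(\Omega)$: since $f\in C^2$ and $\phi^f_s(u_n)=u_n+\mathcal{O}(s)$, one has $\phi^f_{\tau/2}(u_n)=u_n+\tfrac{\tau}{2}f(u_n)+\mathcal{O}(\tau^2)$ in $W^{2,p}(\Omega)$, and then $B$ is continuous into $W^{1-1/p,p}(\partial\Omega)$. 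For the remaining term, I would invoke Proposition~\ref{thm : global order one} (whose proof indeed does not require any structural assumption on $q_n$) to control $u_n-u(t_n)$, combine it with the uniform $W^{2,p}$-boundedness of the sequence $(u_n)$ coming from the analytic semigroup and the construction of the splitting, and then use the Lipschitz continuity of $f$ together with continuity of $B$.

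The main obstacle is precisely this last step: Proposition~\ref{thm : global order one} gives the first-order bound in $L^p(\Omega)$, whereas the argument asks naturally for an estimate of $f(u_n)-f(u(t_n))$ in a norm strong enough to be traced. The remedy I plan is the classical interpolation/bootstrap argument: the uniform $W^{2,p}$-boundedness of $u_n$ and $u(t_n)$ combined with the $L^p$ convergence yields a $W^{1,p}$ estimate of order $\tau^{1/2}$ (and actually of order $\tau$ in all the intermediate spaces relevant to the trace, via Theorem~\ref{thm : fractional}); this is enough to conclude. Once the boundary estimate is in place, the decomposition above delivers $\phi_n$ and closes the proof.
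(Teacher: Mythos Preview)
Your lifting strategy is correct in spirit, but the final step---the trace estimate for $B\bigl(f(u_n)-f(u(t_n))\bigr)$---does not close as written. Interpolation between the $L^p$ bound from Proposition~\ref{thm : global order one} and a uniform $W^{2,p}$ bound on $u_n$ gives only $\|u_n-u(t_n)\|_{W^{1,p}}=\mathcal{O}(\tau^{1/2})$, hence $\|r_n\|_{L^p}=\mathcal{O}(\tau^{1/2})$, and the conclusion becomes $\widehat{\psi}(s)=\phi_n+\mathcal{O}(\tau^{1/2})$ rather than $\mathcal{O}(\tau)$. Theorem~\ref{thm : fractional} does not upgrade this: it tells you that $W^{1,p}\subset D((-A)^\alpha)$, which is useful for the parabolic smoothing bounds elsewhere, but it does not convert a $\tau^{1/2}$ estimate in an intermediate space into a $\tau$ estimate. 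With only $\mathcal{O}(\tau^{1/2})$ in condition~\eqref{eq:QuadratureCondition}, Lemma~\ref{lemma : Quadrature Error lemma 2} yields $A\mathcal{O}(\tau^{5/2})$ instead of $A\mathcal{O}(\tau^3)$, and the global order-two estimate is lost.

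The paper sidesteps the whole trace-space issue by a more direct choice of $\phi_n$. Since $Bq_n$ is \emph{defined} to equal $\tfrac{2}{\tau}\bigl(B\phi^f_{\tau/2}(u_n)-Bu_n\bigr)$, the function
\[
\phi_n:=q_n-\Bigl(\tfrac{2}{\tau}\phi^f_{\tau/2}(u_n)-\tfrac{2}{\tau}u_n\Bigr)
\]
satisfies $B\phi_n=0$ \emph{exactly}, with no lifting and no boundary estimate needed. One then only has to check that the remainder $\tfrac{2}{\tau}\bigl(\phi^f_{\tau/2}(u_n)-u_n\bigr)-f(u(t_n+s))$ is $\mathcal{O}(\tau)$ in $L^p$: writing $\tfrac{2}{\tau}\bigl(\phi^f_{\tau/2}(u_n)-u_n\bigr)=f(u_n)+\mathcal{O}(\tau)$ and invoking Proposition~\ref{thm : global order one} together with the Lipschitz continuity of $f$ in $L^p$ (used throughout the paper) suffices. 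The bound $A\phi_n=\mathcal{O}(1)$ follows because $q_n$ and $\tfrac{2}{\tau}\int_0^{\tau/2}f(\phi^f_s(u_n))\,ds$ are both bounded in $W^{2,p}$. The moral: exploit the exact structure of the boundary condition rather than reach for a generic lifting.
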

\begin{proof}
We observe that 
$$f(u_n)=\frac{2}{\tau}\phi^f_{\frac{\tau}{2}}(u_n)-\frac{2}{\tau}u_n+\bigo(\tau).$$
We obtain, with Proposition~\ref{thm : global order one}, that
\begin{align*}
f(u(t_n+s))&=f(u_n)+\bigo(\tau)=\frac{2}{\tau}\phi^f_{\frac{\tau}{2}}(u_n)-\frac{2}{\tau}u_n+\bigo(\tau).
\end{align*}
We define $\phi_n$ as follows,  
$$
\phi_n=q_n-\left(\frac{2}{\tau}\phi^f_{\frac{\tau}{2}}(u_n)-\frac{2}{\tau}u_n\right).
$$
Since $B(q_n-(\frac{2}{\tau}\phi^f_{\frac{\tau}{2}}(u_n)-\frac{2}{\tau}u_n))=0$, $\phi_n$ is in $D(A)$.
Furthermore, a simple calculation yields
$$
\|A\phi_n\|\leq C.
$$
Therefore $A\phi_n=\bigo(1)$.
\end{proof}
We now consider the boundary condition~\eqref{eq : q condition 2} for the corrector functions.
\begin{lemma}\label{lemma : projection 2}
Let $q_n$ be chosen such that~\eqref{eq : q condition 2} is satisfied,
\begin{align*}
Bq_n&=\frac{2}{\tau}(B\phi^f_{\tau/2}(u_n)-b_n).
\end{align*}
Let $\widehat{\psi}(s)=q_n-f(u(t_n+s))$.
Then $\widehat{\psi}(s)=\phi_n+\bigo(\tau)$, with $\phi_n\in D(A)$ and $A\phi_n=\bigo(1)$. 
\end{lemma}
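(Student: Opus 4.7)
The plan is to follow the template of Lemma~\ref{lemma : projection 1}, augmented by a single boundary-lifting correction that compensates for the replacement of $Bu_n$ by $b_n$ in the boundary condition. I begin with the natural candidate
$\widetilde\phi_n = q_n - \tfrac{2}{\tau}\bigl(\phi^f_{\tau/2}(u_n) - u_n\bigr)$,
for which the arguments of Lemma~\ref{lemma : projection 1} (Taylor expansion of $\phi^f_{\tau/2}$ in $W^{2,p}$, Proposition~\ref{thm : global order one}, Lipschitz continuity of $f$) yield $\widehat\psi(s) = \widetilde\phi_n + \bigo(\tau)$ in $L^p$ and $\|A\widetilde\phi_n\|_{L^p}$ bounded. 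The only discrepancy with Lemma~\ref{lemma : projection 1} now appears on the boundary: substituting~\eqref{eq : q condition 2},
$B\widetilde\phi_n = \tfrac{2}{\tau}\bigl(Bu_n - b_n\bigr) = \tfrac{2}{\tau}\varepsilon_n$,
with $\varepsilon_n := Bu_n - b_n$ not necessarily vanishing.

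To close the proof I would set $\phi_n := \widetilde\phi_n - \chi_n$, where $\chi_n \in W^{2,p}(\Omega)$ is chosen with $B\chi_n = \tfrac{2}{\tau}\varepsilon_n$ on $\partial\Omega$ via the standard trace-lifting theorem. Provided the boundary discrepancy satisfies the sharper bound $\varepsilon_n = \bigo(\tau^2)$ uniformly on $\partial\Omega$, the lift can be constructed with $\|\chi_n\|_{W^{2,p}} \leq C\tau$, which gives $\|\chi_n\|_{L^p} = \bigo(\tau)$ and $\|A\chi_n\|_{L^p} = \bigo(1)$. This lets $\phi_n$ meet all three requirements: $B\phi_n = 0$ so $\phi_n \in D(A)$, $\widehat\psi(s) = \phi_n + \bigo(\tau)$, and $A\phi_n = \bigo(1)$.

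The main obstacle, and where I expect the bulk of technical work, is to establish the sharpened trace bound $\varepsilon_n = \bigo(\tau^2)$. Tracing through the five stages of~\eqref{eq : five parts Strang splitting}, using that the middle flow $\phi^{D+q_n}_\tau$ imposes the boundary value $b$ exactly at its output, and Taylor-expanding the outer flows $\phi^f_{\tau/2}$ and $\phi^{-q_n}_{\tau/2}$ on $\partial\Omega$ together with the defining relation~\eqref{eq : q condition 2}, one obtains the damped recursion $\varepsilon_{n+1} = -\varepsilon_n + r_n\tau^2$ with $\varepsilon_0 = 0$ and $r_n$ uniformly bounded. The naive triangle inequality only yields $\varepsilon_n = \bigo(n\tau^2) = \bigo(\tau)$, which is insufficient to produce an $L^p$-small lift. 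The sharper bound $\varepsilon_n = \bigo(\tau^2)$ is obtained by performing a summation-by-parts on $\varepsilon_n = \tau^2\sum_{k<n}(-1)^{n-1-k}r_k$ and exploiting that the residuals $r_k$ are slowly varying in $k$ (being smooth functions of $u(t_k)$ plus $\bigo(\tau)$ errors, via Proposition~\ref{thm : global order one}), so that the alternating sum collapses to $\bigo(1)$ rather than the crude $\bigo(n)$.
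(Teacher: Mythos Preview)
Your route is genuinely different from the paper's. The paper argues by a one-step induction on $n$: it writes $\phi^f_{\tau/2}(u_k)$ in terms of $v_{k-1}$, the output of the diffusion substep at the \emph{previous} step, via $u_k=\phi^f_{\tau/2}(v_{k-1}-\tfrac{\tau}{2}q_{k-1})$ and a midpoint quadrature, giving $\phi^f_{\tau/2}(u_k)=v_{k-1}+\tau f(u_k)-\tfrac{\tau}{2}q_{k-1}+\bigo(\tau^2)$. Since the diffusion flow imposes the boundary condition exactly, $Bv_{k-1}=b_k$, and the inductive hypothesis gives $Bq_{k-1}=Bf(u(t_{k-1}))+B\bigo(\tau)$; substituting into~\eqref{eq : q condition 2} yields $Bq_k=Bf(u(t_k))+B\bigo(\tau)$ directly. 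One then splits $q_k=\tilde q_k+r_k$ with $B\tilde q_k=Bf(u(t_k))$, $r_k=\bigo(\tau)$, and sets $\phi_k=\tilde q_k-f(u(t_k))$. No tracking of $\varepsilon_n=Bu_n-b_n$ and no summation by parts are used; the alternating structure you identify is absorbed into the single inductive step.

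Your alternative makes the damping mechanism $\varepsilon_{n+1}=-\varepsilon_n+r_n\tau^2$ explicit, which is instructive, but there is a gap at the summation-by-parts step. To collapse $\varepsilon_n=\tau^2\sum_{k<n}(-1)^{n-1-k}r_k$ from the crude $\bigo(n\tau^2)=\bigo(\tau)$ to $\bigo(\tau^2)$ you need $r_{k+1}-r_k=\bigo(\tau)$. Unpacking $r_k$ gives (to leading order) $\tfrac{1}{2\tau}Bf'(u_k)(\tilde v_k-u_k)$, and since $\tilde v_k=\phi^f_{-\tau/2}(u_{k+1})$ this difference involves the discrete second difference $u_{k+2}-2u_{k+1}+u_k$. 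After removing the smooth part $u(t_{k+2})-2u(t_{k+1})+u(t_k)=\bigo(\tau^2)$ you are left with $e_{k+2}-2e_{k+1}+e_k$, where $e_j=u_j-u(t_j)$. Proposition~\ref{thm : global order one} only yields $e_j=\bigo(\tau)$ in $L^p(\Omega)$; it gives neither the increment bound $e_{j+1}-e_j=\bigo(\tau^2)$ nor the $W^{1,p}$ (trace) control needed to apply $B$. So the claim that ``$r_k$ is a smooth function of $u(t_k)$ plus $\bigo(\tau)$'' does not follow from the cited proposition, and without it the alternating sum does not telescope to $\bigo(1)$. The paper's induction sidesteps this entirely by working with $Bq_{k-1}$ (controlled by hypothesis) rather than with $\varepsilon_{k-1}$, and by never dividing an $\bigo(\tau)$ global-error quantity by $\tau$.
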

\begin{proof}
The proof is conducted by induction. Since $Bu_0=b_0$, we know by Lemma~\ref{lemma : projection 1} that the result is true for $n=0$.

We assume that the statement is true for $n=0,\ldots,k-1$. Let us show that it is true for $n=k$. We write the exact formula for $\phi^f_{\frac{\tau}{2}}(u_k)$ in function of $v_{k-1}$ and $q_k$:
\begin{align*}
\phi^f_{\frac{\tau}{2}}(u_k)&=v_{k-1}-\frac{\tau}{2}q_{k-1}+\int_0^\tau f(\phi^f_s(v_{k-1}-\frac{\tau}{2}q_{k-1}))ds.
\end{align*}
We then apply the midpoint quadrature formula to the integral and obtain an error of size $\bigo(\tau^2)$ since $f$ is twice continuously differentiable:
\begin{align*}
\phi^f_{\frac{\tau}{2}}(u_k)&=v_{k-1}+\tau f(\phi^f_{\frac{\tau}{2}}(v_{k-1}-\frac{\tau}{2}q_{k-1}))-\frac{\tau}{2}q_{k-1}+\bigo(\tau^2).
\end{align*}
We observe that $\phi^f_{\frac{\tau}{2}}(v_{k-1}-\frac{\tau}{2}q_{k-1})=u_k$. Since by Proposition~\ref{thm : global order one}, $u_k=f(u(t_k))+\bigo(\tau)$ and since $f(u(t_k))=f(u(t_{k-1}))+\bigo(\tau)$, we obtain
\begin{align*}
\phi^f_{\frac{\tau}{2}}(u_k)&=v_{k-1}+\frac{\tau}{2} f(u(t_k))+\frac{\tau}{2} f(u(t_{k-1}))-\frac{\tau}{2}q_{k-1}+\bigo(\tau).
\end{align*}
Since $Bv_{k-1}=b_k$, and since by hypothesis $f(u(t_{k-1}))-q_{k-1}=\phi_{k-1}+\bigo(\tau)$ with $\phi_{k-1}\in D(A)$ and $A\phi_{k-1}=\bigo(1)$, we obtain
$$
Bq_k\!=\!\frac{2}{\tau}(Bv_{k-1}-b_k)+Bf(u(t_k))+Bf(u(t_{k-1}))- Bq_{k-1}+B\bigo(\tau)\!=\!Bf(u(t_k))+B\bigo(\tau).
$$
We can therefore decompose $q_k$ as $q_k=\tilde{q}_k+r_k$, where $B\tilde{q}_k=Bf(u(t_k))$ and $r_k=\bigo(\tau)$ and choose $\phi_k=\tilde{q}_k-f(u(t_k))$. Then $B\phi_k=0$ and a simple calculation yields $\|A\phi_k\|\leq C$. This concludes the proof.
\end{proof}
\subsection{Order two error estimate for the five parts modified Strang splitting}

The following lemma is an estimation of the local error for the modified Stang splitting, which states that the five parts modified Strang splitting~\eqref{eq : five parts Strang splitting} is locally a method of second order.
\begin{lemma}\label{lemme : Local order estimate}
Under the assumption of section~\ref{section : framework}, the five parts modified Strang splitting~\eqref{eq : five parts Strang splitting} satisfies  
$$
\delta_{n+1}=\bigo(\tau^2).
$$
\end{lemma}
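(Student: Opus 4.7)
The plan is to estimate each of the three integrals appearing in the exact local error formula~\eqref{eq : local error} directly up to $\bigo(\tau^2)$, rather than going through a quadrature approximation as in the proof of Lemma~\ref{thm : local error 1}. The essential input is Lemmas~\ref{lemma : projection 1} and~\ref{lemma : projection 2}, which guarantee the existence of $\phi_n \in D(A)$ with $A\phi_n = \bigo(1)$ such that $q_n - f(u(t_n+s)) = \phi_n + \bigo(\tau)$ uniformly in $s \in [0,\tau]$; in particular, $f(u(t_n)) - q_n = -\phi_n + \bigo(\tau)$.

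First, I would treat the outer integrals. Using $\phi^f_{s/2}(u(t_n)) = u(t_n) + \bigo(s)$, Lipschitz continuity of $f$, and the uniform boundedness of $\e^{\tau A}$, the first integral reduces to
\begin{equation*}
\e^{\tau A}\tfrac{\tau}{2}\bigl(f(u(t_n))-q_n\bigr) + \bigo(\tau^2) = -\tfrac{\tau}{2}\e^{\tau A}\phi_n + \bigo(\tau^2).
\end{equation*}
The stability estimate $\tilde{v}_n = u(t_n) + \bigo(\tau)$ (already used in Lemma~\ref{thm : local error 1}), combined with the same expansion, yields the third integral $-\tfrac{\tau}{2}\phi_n + \bigo(\tau^2)$. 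For the middle integral, substituting $q_n - f(u(t_n+s)) = \phi_n + \bigo(\tau)$ and using $\|\e^{(\tau-s)A}\| \leq C$ gives $\int_0^\tau \e^{(\tau-s)A}\phi_n\,ds + \bigo(\tau^2)$. Here the key regularity $\phi_n \in D(A)$ with $A\phi_n = \bigo(1)$ provides $\|\e^{(\tau-s)A}\phi_n - \phi_n\| = \|\int_0^{\tau-s}\e^{rA}A\phi_n\,dr\| \leq C(\tau-s)$, and hence the middle integral equals $\tau\phi_n + \bigo(\tau^2)$.

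Summing the three contributions, the leading $\tau$-order terms cancel nontrivially:
\begin{equation*}
\delta_{n+1} = -\tfrac{\tau}{2}\e^{\tau A}\phi_n + \tau\phi_n - \tfrac{\tau}{2}\phi_n + \bigo(\tau^2) = -\tfrac{\tau}{2}(\e^{\tau A} - Id)\phi_n + \bigo(\tau^2).
\end{equation*}
A final application of the identity $(\e^{\tau A}-Id)\phi_n = \int_0^\tau \e^{sA}A\phi_n\,ds$, which gives $\|(\e^{\tau A}-Id)\phi_n\| \leq C\tau\|A\phi_n\| \leq C\tau$, reduces the remaining term to $\bigo(\tau^2)$ and completes the proof.

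The main obstacle is that neither $f(u(t_n))$ nor $q_n$ separately lies in $D(A)$, so the parabolic smoothing of $\e^{tA}$ cannot be exploited on either piece alone and naive Taylor expansions fail. The argument works only because the specific combination $\phi_n \approx q_n - f(u(t_n))$ does belong to $D(A)$ with $A\phi_n$ bounded, which is exactly what the boundary condition~\eqref{eq : q condition} (or~\eqref{eq : q condition 2}) imposed on the corrector $q_n$ is designed to enforce. The resulting cancellation of the three $\tau$-order contributions through the combination $\e^{\tau A} - Id$ is the structural mechanism by which the five-parts modification circumvents the order reduction that would otherwise leave behind a problematic $A\,\bigo(\tau^2)$ term, as occurred in Lemma~\ref{thm : local error 1}.
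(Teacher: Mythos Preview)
Your proof is correct, but it organizes the estimate differently from the paper. The paper applies left/right rectangle quadrature rules to the three integrals in~\eqref{eq : local error} (invoking Lemma~\ref{lemma : Quadrature Error lemma} with condition~\eqref{eq:QuadratureCondition} for the middle one), then uses the expansion $\e^{\tau A}\frac{\tau}{2}(f(u(t_n))-q_n)=\frac{\tau}{2}(f(u(t_{n+1}))-q_n)+\bigo(\tau^2)$ to collapse everything into $\delta_{n+1}=\frac{\tau}{2}(f(\tilde v_n)-f(u(t_{n+1})))+\bigo(\tau^2)$; the final step is a direct $\bigo(\tau)$ bound on $\tilde v_n-u(t_{n+1})$ via the exact integral formulas and Lipschitz continuity of $f$. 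You instead substitute $q_n-f(u(t_n+s))=\phi_n+\bigo(\tau)$ from Lemmas~\ref{lemma : projection 1}--\ref{lemma : projection 2} into each integral immediately, which makes the cancellation structurally explicit: the three leading contributions combine into $-\frac{\tau}{2}(\e^{\tau A}-Id)\phi_n$, and the single semigroup identity $(\e^{\tau A}-Id)\phi_n=\int_0^\tau\e^{sA}A\phi_n\,ds$ with $A\phi_n=\bigo(1)$ finishes the argument. Both routes rest on the same regularity input ($\phi_n\in D(A)$, $A\phi_n$ bounded), but your version avoids the detour through Lemma~\ref{lemma : Quadrature Error lemma} and the separate comparison of $\tilde v_n$ with $u(t_{n+1})$, while the paper's route has the minor advantage of reusing its general quadrature machinery and of isolating the residual $f(\tilde v_n)-f(u(t_{n+1}))$, which foreshadows the identity~\eqref{eq : local error bis} used later in Lemma~\ref{thm : local order modified 2}.
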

\begin{proof}

In the exact formula of $\delta_{n+1}$ \eqref{eq : local error}, we use the left rectangle quadrature formula for the first and third integral and a right quadrature formula for the second integral. 
By Lemma~\ref{lemma : Quadrature Error lemma}, the quadrature error is $\bigo(\tau^2)$.
We get
\begin{align*}
\delta_{n+1}=\e^{\tau A}\frac{\tau}{2}(f(u(t_n))-q_n)+\tau(q_n-f(u(t_n+\tau)))+\frac{\tau}{2}(f(\tilde{v}_n)-q_n)+\bigo(\tau^2).
\end{align*}
Expending $\e^{\tau A}$ and $u(t_n)$ around $\tau$, that is, $\e^{\tau A}\frac{\tau}{2}(f(u(t_n))-q_n)=\frac{\tau}{2}(f(u(t_{n+1}))-q_n)+\bigo(\tau^2)$, we obtain the following result:
\begin{align*}
\delta_{n+1}=\frac{\tau}{2}(f(\tilde{v}_n)-f(u(t_{n+1}))+\bigo(\tau^2).
\end{align*}
We use the exact formula for $\tilde{v}_n$ and $u(t_{n+1})$ and the Lipschitz continuity of $f$:
\begin{align*}
\|\delta_{n+1}\|&\leq C\frac{\tau}{2} \Big\|\e^{\tau A}\int_0^{\tau/2}(f(\phi^{f}_{s}(u(t_n)))-q_n)ds+\int_0^\tau\e^{(\tau - s)A}(q_n-f(u(t_n+s)))ds\Big\|\\
&+\bigo(\tau^2).
\end{align*}
Since the integrands are uniformly bounded on $[0,\tau]$, we get $
\delta_{n+1}=\bigo(\tau^2)$, which concludes the proof.
\end{proof}
The following lemma gives the second local error estimates that we need in the proof for the global convergence of the method.

\begin{lemma}\label{thm : local order modified 2}
Under the assumption of section~\ref{section : framework}, the five parts modified Strang splitting~\eqref{eq : five parts Strang splitting} satisfies
$$
\delta_{n+1}=A\bigo(\tau^3)+\bigo(\tau^3).
$$
\end{lemma}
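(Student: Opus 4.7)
The plan is to improve the proof of Lemma~\ref{thm : local error 1} by exploiting the fact that condition~\eqref{eq:QuadratureCondition} is now available for the integrand $\widehat{\psi}(s)=q_n-f(u(t_n+s))$, thanks to Lemmas~\ref{lemma : projection 1} and~\ref{lemma : projection 2}. Starting from the exact formula~\eqref{eq : local error} for the local error, I would apply the trapezoidal rule to each of the three integrals. For the middle integral $\int_0^\tau\e^{(\tau-s)A}(q_n-f(u(t_n+s)))\,ds$, estimate~\eqref{eq : Quadature error estimate 2} of Lemma~\ref{lemma : Quadrature Error lemma 2} then applies and gives a quadrature remainder of $A\bigo(\tau^3)+\bigo(\tau^3)$, instead of the $A\bigo(\tau^2)+\bigo(\tau^2)$ used in Lemma~\ref{thm : local error 1}. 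For the first and third integrals, no semigroup is involved and the integrands $s\mapsto f(\phi^f_{s/2}(u(t_n)))-q_n$ and $s\mapsto f(\phi^f_{s/2}(\tilde v_n))-q_n$ are twice continuously differentiable as maps into $L^p(\Omega)$, so the trapezoidal remainder is simply $\bigo(\tau^3)$.

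Substituting these three trapezoidal approximations into~\eqref{eq : local error} expresses $\delta_{n+1}$ as an explicit combination of values of $f$ and $q_n$ at $u(t_n)$, $\phi^f_{\tau/2}(u(t_n))$, $u(t_n+\tau)$, $\tilde v_n$ and $\phi^f_{\tau/2}(\tilde v_n)$, premultiplied where needed by $\e^{\tau A}=Id+\tau A\varphi_1(\tau A)$, plus a remainder of the desired form $A\bigo(\tau^3)+\bigo(\tau^3)$. I would then Taylor expand to order two in $\tau$ using $\phi^f_{\tau/2}(y)=y+\tfrac{\tau}{2}f(y)+\bigo(\tau^2)$, $u(t_n+\tau)=u(t_n)+\tau\partial_t u(t_n)+\bigo(\tau^2)$, $\partial_t u(t_n)=Du(t_n)+f(u(t_n))$, and, using the exact formulas for $v_n$ and $\tilde w_n$ of Section~\ref{section : analysis}, the expansion $\tilde v_n=u(t_n)+\tau Du(t_n)+\tfrac{\tau}{2}f(u(t_n))+\bigo(\tau^2)$ in $L^p(\Omega)$.

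The main obstacle is bookkeeping the cancellations at orders~$\tau$ and~$\tau^2$. At order~$\tau$, the three leading pieces $\tfrac{\tau}{2}(f(u(t_n))-q_n)+\tau(q_n-f(u(t_n)))+\tfrac{\tau}{2}(f(u(t_n))-q_n)$ sum to zero, as in the proof of Lemma~\ref{lemme : Local order estimate}. At order~$\tau^2$ two independent cancellations must be checked. First, the $\tau A\varphi_1(\tau A)$-contributions produced by $\e^{\tau A}$ on the first and middle trapezoidal values combine into $\tfrac{\tau^2}{2}A\varphi_1(\tau A)[(f(u(t_n))-q_n)+(q_n-f(u(t_n)))]=0$; here the boundedness of $\varphi_1(\tau A)A\phi_n$, ensured by~\eqref{eq:QuadratureCondition}, is crucial to treat each term separately. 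Second, the contributions $\tfrac{\tau^2}{4}f'(u(t_n))f(u(t_n))$ from the first and third integrals must combine with $-\tfrac{\tau^2}{2}f'(u(t_n))\partial_t u(t_n)$ from the middle integral and with $\tfrac{\tau}{2}f'(u(t_n))(\tilde v_n-u(t_n))$ coming from the third integral, the latter supplying precisely the missing $Du(t_n)$ contribution through the second-order expansion of $\tilde v_n$ above; using $\partial_t u(t_n)=Du(t_n)+f(u(t_n))$, all of these sum to zero. What remains is of the desired form $A\bigo(\tau^3)+\bigo(\tau^3)$, which completes the proof.
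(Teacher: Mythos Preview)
Your approach is genuinely different from the paper's. After the trapezoidal step (which both of you share), the paper does \emph{not} Taylor expand: instead it uses Lemma~\ref{lemme : Local order estimate} to replace $f(\phi^f_{\tau/2}(\tilde v_n))$ by $f(u(t_{n+1}))+\bigo(\tau^2)$, simplifies algebraically to four terms, and then \emph{re-interprets} these four terms as the trapezoidal approximations of two new integrals of the form $\int_0^\tau \e^{(\tau-s)A}\widehat\psi(s)\,ds$, one with $\widehat\psi(s)=f(u(t_n+s))-q_n$ and one with $\widehat\psi(s)=f\bigl(\phi^f_{(\tau-s)/2}\circ\phi^{-q_n}_{s/2}\circ\phi^{D+q_n}_s\circ\phi^{-q_n}_{s/2}\circ\phi^f_{s/2}(u(t_n))\bigr)-q_n$. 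Lemma~\ref{lemma : Quadrature Error lemma 2} is applied again (to pass back to the integrals), then the midpoint rule, and finally Lemma~\ref{lemme : Local order estimate} a second time (for step~$\tau/2$, since the second integrand evaluated at $s=\tau/2$ is precisely $f(\mathcal S_{\tau/2}(u(t_n)))-q_n$). This systematically avoids expanding $\tilde v_n$ or the semigroup.

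Your direct expansion works in principle, but the key assertion
\[
\tilde v_n = u(t_n)+\tau Du(t_n)+\tfrac{\tau}{2}f(u(t_n))+\bigo(\tau^2)\quad\text{in }L^p(\Omega)
\]
is where the real difficulty hides, and ``using the exact formulas for $v_n$ and $\tilde w_n$'' is not enough. If you expand the formula for $v_n$ directly you must expand $\e^{\tau A}(u(t_n)-z_n(0))$ to second order, and the remainder $\e^{\tau A}g-g-\tau Ag$ is only $\bigo(\tau^2)$ in $L^p$ when $g\in D(A^2)$; for $g=u(t_n)-z_n(0)\in D(A)$ one only gets a remainder of the form $A\bigo(\tau^2)$, which is not $\bigo(\tau^2)$ in $L^p$ and cannot be fed into $f'(u(t_n))$. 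The correct route is to subtract the variation-of-constants formula~\eqref{eq:exact solution} for $u(t_{n+1})$ from the formula for $v_n$, so that the dangerous $\e^{\tau A}(u(t_n)-z_n(0))$ term cancels; what remains is $\e^{\tau A}\int_0^{\tau/2}(f(\phi^f_s(u(t_n)))-q_n)\,ds+\int_0^\tau \e^{(\tau-s)A}(q_n-f(u(t_n+s)))\,ds-\tfrac{\tau}{2}q_n$, and Lemma~\ref{lemma : Quadrature Error lemma} together with Lemmas~\ref{lemma : projection 1}/\ref{lemma : projection 2} gives $\tilde v_n-u(t_{n+1})=-\tfrac{\tau}{2}f(u(t_n))+\bigo(\tau^2)$, from which your expansion follows via $u\in C^2([0,T],W^{2,p})$. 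Once this is in place, your cancellation bookkeeping is correct (note that the $\tau A\varphi_1(\tau A)$ contributions do not literally sum to zero, since the first trapezoidal value involves $f(\phi^f_{\tau/2}(u(t_n)))$ and not $f(u(t_n))$, but the discrepancy is $\tfrac{\tau^2}{4}A\varphi_1(\tau A)\bigo(\tau)=A\bigo(\tau^3)$ and is harmless).
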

\begin{proof}
We start as in the proof of Lemma~\ref{thm : local error 1},  by using trapezoidal quadrature formulas to approximate the integrals in formula \eqref{eq : local error} of the local error. By Lemma~\ref{lemma : Quadrature Error lemma 2}, the quadrature error made to approximate $\int_0^\tau\e^{(\tau - s)A}(q_n-f(u(t_n+s)))ds$ is equal to $A\bigo(\tau^3)+\bigo(\tau^3)$.
We get
\begin{align*}
&\delta_{n+1}=\e^{\tau A}\frac{\tau}{4} \left( f(u(t_n))-q_n+f(\phi^{f}_{\tau/2}(u(t_n)))-q_n\right)\\
&+\frac{\tau}{2}\left(\e^{\tau A}(q_n-f(u(t_n)))+q_n-f(u(t_n+\tau)))\right)\\
&+\frac{\tau}{4} \left( f(\tilde{v}_n)-q_n+f(\phi^{f}_{\tau/2}(\tilde{v}_n))-q_n\right)+A\bigo(\tau^3)+\bigo(\tau^3)
\end{align*}
By Lemma~\ref{lemme : Local order estimate}, we have the following equality:
\begin{align}\label{eq : local error bis}
f(\phi_{\tau/2}^{f}(\tilde{v}_n))&=f(\mathcal{S}_{\tau}(u(t_n)))=f(u(t_n+\tau))+\bigo(\tau^2).
\end{align}
We obtain
\begin{align*}
&\delta_{n+1}=\e^{\tau A}\frac{\tau}{4}(f(\phi^f_{\tau/2}(u(t_n)))-q_n)-\frac{\tau}{4}\e^{\tau A}(f(u(t_n))-q_n)\\
&+\frac{\tau}{4}(f(\tilde{v}_n)-q_n)-\frac{\tau}{4}(f(u(t_n+\tau))-q_n)+A\bigo(\tau^3)+\bigo(\tau^3).
\end{align*}
We observe that $\frac{\tau}{2}\e^{\tau A}(f(u(t_n))-q_n)+\frac{\tau}{2}(f(u(t_n+\tau))-q_n)$ is the trapezoidal quadrature formula for $\int_0^\tau \e^{(\tau-s)A}(f(u(t_n+s)))-q_n)ds$ and that $\e^{\tau A}\frac{\tau}{2}(f(\phi^f_{\tau/2}(u(t_n)))-q_n)+\frac{\tau}{2}(f(\tilde{v}_n)-q_n)$ is the trapezoidal quadrature formula for $\int_0^\tau \e^{(\tau-s)A}(f(\phi_{(\tau-s)/2}^{f}\circ\phi^{-q_n}_{s/2}\circ\phi_s^{D+q_n}\circ\phi^{-q_n}_{s/2}\circ\phi_{s/2}^{f}(u(t_n)))-q_n)ds$. Since $\phi_{(\tau-s)/2}^{f}\circ\phi^{-q_n}_{s/2}\circ\phi_s^{D+q_n}\circ\phi^{-q_n}_{s/2}\circ\phi_{s/2}^{f}(u(t_n))=u(t_n)+\bigo(\tau)$, the second integrand satisfies the hypotheses of Lemma~\ref{lemma : Quadrature Error lemma 2}, by Lemmas \ref{lemma : projection 1} and \ref{lemma : projection 2}. Applying Lemma~\ref{lemma : Quadrature Error lemma 2}, we therefore have a quadrature error of the form $A\bigo(\tau^3)+\bigo(\tau^3)$,
\begin{align*}
&\delta_{n+1}=\frac{1}{2}\int_0^\tau \e^{(\tau-s)A}(f(\phi_{\frac{\tau-s}{2}}^{f}\circ\phi^{-q_n}_{\frac{s}{2}}\circ\phi_s^{D+q_n}\circ\phi^{-q_n}_{\frac{s}{2}}\circ\phi_{\frac{s}{2}}^{f}(u(t_n)))-q_n)ds \\
&-\frac{1}{2}\int_0^\tau \e^{(\tau-s)A}(f(u(t_n+s)))-q_n)ds+A\bigo(\tau^3)+\bigo(\tau^3).
\end{align*}
Applying the midpoint quadrature method to both integrals, we obtain
\begin{align*}
&\delta_{n+1}=\frac{\tau}{4}\e^{\frac{\tau}{2} A}(f(\mathcal{S}_{\frac{\tau}{2}}(u(t_n)))-q_n)-\frac{\tau}{4}\e^{\frac{\tau}{2} A}(f(u(t_n+\frac{\tau}{2}))-q_n)+A\bigo(\tau^3)+\bigo(\tau^3).
\end{align*}
Using Lemma~\ref{lemme : Local order estimate}, the Lipschitz continuity of $f$, and the boundedness of $\e^{\frac{\tau}{2} A}$ we have the desired result,
\begin{align*}
\delta_{n+1}&=A\bigo(\tau^3)+\bigo(\tau^3),
\end{align*}
which concludes the proof.
\end{proof}
We now improve Lemma~\ref{thm : local order modified 2} as follows using Lemma~\ref{lemma : Quadrature Error lemma fractional} instead of Lemma~\ref{lemma : Quadrature Error lemma 2} for the quadrature error.
\begin{lemma}\label{thm : local error modified fractional}
Under the assumption of section~\ref{section : framework}, the five parts modified Strang splitting~\eqref{eq : five parts Strang splitting} satisfies the following. There exists $\alpha>0$ such that
$$
\delta_{n+1}=(-A)^{1-{\alpha}}\bigo(\tau^3)+\bigo(\tau^3).
$$
\end{lemma}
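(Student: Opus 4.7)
The plan is to mimic the proof of Lemma~\ref{thm : local order modified 2} verbatim, simply replacing every invocation of Lemma~\ref{lemma : Quadrature Error lemma 2} by its sharper counterpart Lemma~\ref{lemma : Quadrature Error lemma fractional}. The overall architecture of the argument is unchanged; only the quadrature error terms are refined from $A\bigo(\tau^3)$ to $(-A)^{1-\alpha}\bigo(\tau^3)$.

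Concretely, I would start from the exact expression~\eqref{eq : local error} for $\delta_{n+1}$ and apply the trapezoidal rule to the three integrals. For the integral $\int_0^\tau \e^{(\tau-s)A}(q_n - f(u(t_n+s)))\,ds$, Lemma~\ref{lemma : projection 1} (or Lemma~\ref{lemma : projection 2}, depending on which corrector is used) guarantees that the integrand $\widehat{\psi}(s) = q_n - f(u(t_n+s))$ satisfies the hypothesis~\eqref{eq:QuadratureCondition}, so Lemma~\ref{lemma : Quadrature Error lemma fractional} applies and yields a quadrature error of $(-A)^{1-\alpha}\bigo(\tau^3) + \bigo(\tau^3)$ (instead of $A\bigo(\tau^3)+\bigo(\tau^3)$).

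Next I would reproduce the algebraic manipulations of the proof of Lemma~\ref{thm : local order modified 2}: use Lemma~\ref{lemme : Local order estimate} and the Lipschitz continuity of $f$ to replace $f(\phi^f_{\tau/2}(\tilde v_n))$ by $f(u(t_n+\tau))+\bigo(\tau^2)$, and then recognize the remaining combination as the difference of two trapezoidal quadratures for
$$
\tfrac12\int_0^\tau \e^{(\tau-s)A}\bigl(f(\phi^f_{(\tau-s)/2}\circ \phi^{-q_n}_{s/2}\circ\phi^{D+q_n}_s\circ\phi^{-q_n}_{s/2}\circ\phi^f_{s/2}(u(t_n)))-q_n\bigr)\,ds
$$
and $\tfrac12\int_0^\tau \e^{(\tau-s)A}(f(u(t_n+s))-q_n)\,ds$. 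Both integrands satisfy the hypothesis~\eqref{eq:QuadratureCondition} (the first because it equals $f(u(t_n))-q_n+\bigo(\tau)$, the second directly by Lemmas~\ref{lemma : projection 1}--\ref{lemma : projection 2}), so a second application of Lemma~\ref{lemma : Quadrature Error lemma fractional} produces another $(-A)^{1-\alpha}\bigo(\tau^3)+\bigo(\tau^3)$ error.

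Finally, I would apply the midpoint rule to each of the two remaining integrals, reducing the problem to bounding $\frac{\tau}{4}\e^{\tau A/2}\bigl(f(\mathcal{S}_{\tau/2}(u(t_n)))-f(u(t_n+\tau/2))\bigr)$, which is $\bigo(\tau^3)$ by Lemma~\ref{lemme : Local order estimate}, the Lipschitz continuity of $f$, and the boundedness of $\e^{\tau A/2}$. Collecting all error contributions yields $\delta_{n+1} = (-A)^{1-\alpha}\bigo(\tau^3) + \bigo(\tau^3)$. The only nontrivial step, conceptually, is the double application of Lemma~\ref{lemma : Quadrature Error lemma fractional}; since the hypothesis~\eqref{eq:QuadratureCondition} has already been verified in the previous subsection and the algebraic identities are identical to those of Lemma~\ref{thm : local order modified 2}, there is no genuine obstacle beyond careful bookkeeping of the fractional power $(-A)^{1-\alpha}$.
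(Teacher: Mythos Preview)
Your proposal is correct and follows exactly the approach indicated in the paper: the paper does not give an explicit proof of this lemma but merely states that one should repeat the proof of Lemma~\ref{thm : local order modified 2} using Lemma~\ref{lemma : Quadrature Error lemma fractional} in place of Lemma~\ref{lemma : Quadrature Error lemma 2} for the quadrature errors. Your detailed write-up faithfully carries out this substitution step by step.
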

Using the previous results for the local error, we can prove Proposition~\ref{thm : global error}. It is the main global error estimate that we obtain without using Theorem~\ref{thm : fractional}. 
\begin{proof}[Proof of Proposition~\ref{thm : global error}]
The proof is similar to the proof of the Proposition~\ref{thm : global error one log}, except for the bounds of the local errors $\delta_{n}$, since if~\eqref{eq : q condition} is satisfied, $\delta_k=A\bigo(\tau^3)+\bigo(\tau^3)$ and $\delta_n=\bigo(\tau^2)$.
Hence
\begin{align*}
\|e_n\|&\leq  C\tau^2(1+|\log(\tau)|)\e^{(n-1)C\tau}\leq C\tau^2(1+|\log(\tau)|)\e^{CT}=C\tau^2(1+|\log(\tau)|),
\end{align*}
which concludes the proof.
\end{proof}
\begin{proof}[Proof of Theorem~\ref{thm : global error fractional}]
We follow the proof of Proposition~\ref{thm : global order one}, using Lemma~\ref{thm : local error modified fractional} to remove the $\log(\tau)$ term in the global error estimate of Proposition~\ref{thm : global error}.
\end{proof}

\section{Numerical experiments}\label{section : Numerical experiments }
In this section we perform several numerical experiments in dimension $d=1,2$ to illustrate the performance of the five parts modified Strang splitting~\eqref{eq : five parts Strang splitting} when applied to diffusion problems with various nonlinearities. The norm we use to compute the numerical error is
$$
\|u\|_{L^{\infty}([0,T],L^2(\Omega))}=\sup_{t\in[0,T]}\|u(t)\|_{L^2(\Omega)},
$$
where we use the trapezoidal rule to approximate the $L^2(\Omega)$ norm. Considering the domain $(0,1)^d$ equipped with an uniform spatial grid with size $\Delta x$, we use the usual second order finite difference approximation of the Laplacian. We compute the diffusion flows in time exactly using the Matlab package described in~\cite{Nie12} for approaching matrix exponentials and the $\varphi_1$ matrix function~\eqref{eq : phi1}.

In the numerical experiments that follow, we denote the classical Strang splitting~\eqref{eq : classical Strang 1} by \emph{Strang}, the modified Strang splitting~\eqref{eq : modified Strang 1} constructed in~\cite{Ost15,Ost16} by \emph{StrangM3}, the five parts modified Strang splitting~\eqref{eq : five parts Strang splitting} with correction~\eqref{eq : classical Strang 1} by \emph{StrangM5a} and the five parts modified Strang splitting~\eqref{eq : five parts Strang splitting} with correction~\eqref{eq : classical Strang 2} by \emph{StrangM5b}.
In Table~\ref{tab : Flows Evaluations}, we recall the number of evaluations of the diffusion flows~(\ref{eq : D},~\ref{eq : D+q}) and the number of evaluations of the source term flows~(\ref{eq : f},~\ref{eq : f-q}) required at each step of the algorithm for the four considered numerical methods. We also collect the total number of flows after $n$ steps, where we consider that the correction steps~\eqref{eq : projection} are negligible. We recall that for \emph{Strang} and \emph{StrangM5b}, only one evaluation of the diffusion flow and one evaluation of the source term flow is needed after the first step, due to the semigroup property of the exact flows (see Remark~\ref{rem : composition}).
\begin{table}[ht]

\centering
\begin{tabular}{|l||c|c|c|}
  \hline
   Splitting methods& Diffusion flows & Source term flows  & Total number of \\
   &for one step & for one step &flows for $n$ steps\\[2pt]
  \hline \hline
  Classical Strang~\eqref{eq : classical Strang 1} & 1 & 1 & $2n+1$ \\ \hline
  StrangM3~\eqref{eq : modified Strang 1}  & 2 & 1 & $3n$ \\
  \hline
   StrangM5a~(\ref{eq : five parts Strang splitting},~\ref{eq : q condition}) & 1 & 2 & $3n$\\
  \hline
   StrangM5b~(\ref{eq : five parts Strang splitting},~\ref{eq : q condition 2})& 1 & 1 & $2n+1$ \\
  \hline
\end{tabular}\caption{\textit{Number of evaluations of the diffusion flows~(\ref{eq : D},~\ref{eq : D+q}) and of the source term flows~(\ref{eq : f},~\ref{eq : f-q}) needed by the considered splitting methods.}}
 \label{tab : Flows Evaluations}
\end{table}

\paragraph*{A quadratic nonlinearity \textnormal{(see Figure~\ref{fig : quadratic})}}First, we apply the splitting methods \emph{Strang}~\eqref{eq : classical Strang 1}, \emph{StrangM3}~\eqref{eq : modified Strang 1}, \emph{StrangM5a}~(\ref{eq : five parts Strang splitting},~\ref{eq : q condition}) and \emph{StrangM5b}~(\ref{eq : five parts Strang splitting},~\ref{eq : q condition 2}) to a problem given in~\cite[Example 5.2]{Ost16}. We then change the nonlinearity to see how the splitting methods behave. The non linearities we consider are $f(u)=u^2$ and  $f(u)=5u^2$. The case $f(u)=u^2$ is the one presented in~\cite[Example 5.2]{Ost16}. We perform the experiment with mixed boundary conditions, $u(0)=1$, $\partial_n u(1)=1$. We choose a smooth initial condition that satisfies the prescribed boundary conditions. We obtain the following equation with $m=1$ and $m=5$. 
\begin{align} \label{eq:Exemple52}
\partial_t u(x,t) &= \partial_{xx}u(x,t)+mu(x,t)^2,\nonumber\\
u(0,t)&=1,\qquad
\partial_n u(1,t) = 1,\nonumber\\
u(x,0)&=1+\frac{2}{\pi}-\frac{2}{\pi}\cos(\frac{1}{2}\pi x).
\end{align}
The correction we use for \emph{StrangM3}, given in~\cite{Ost16} for $m=1$, is $q_n=m+2mxu_n(1)$. We use $500$ spatial points to discretize the interior of $\Omega$. We compute the solution at final time $T=0.1$. The chosen time steps are $\tau=0.02\cdot 2^{-k}$, $k=0,\ldots,6$. The reference solution is computed with the classical fourth order explicit Runge-Kutta method and a very small time step $\tau=0.02\cdot 2^{-14}\approx 10^{-6}$. In the splitting algorithms, we use the analytic formulas for computing the flows $\phi^f_{\frac{\tau}{2}}(u_n)$ and $\phi^{f-q_n}_{\tau}(u_n)$.
\begin{figure}
\centering
\begin{subfigure}{.5\textwidth}
\centering
\includegraphics[width=\textwidth]{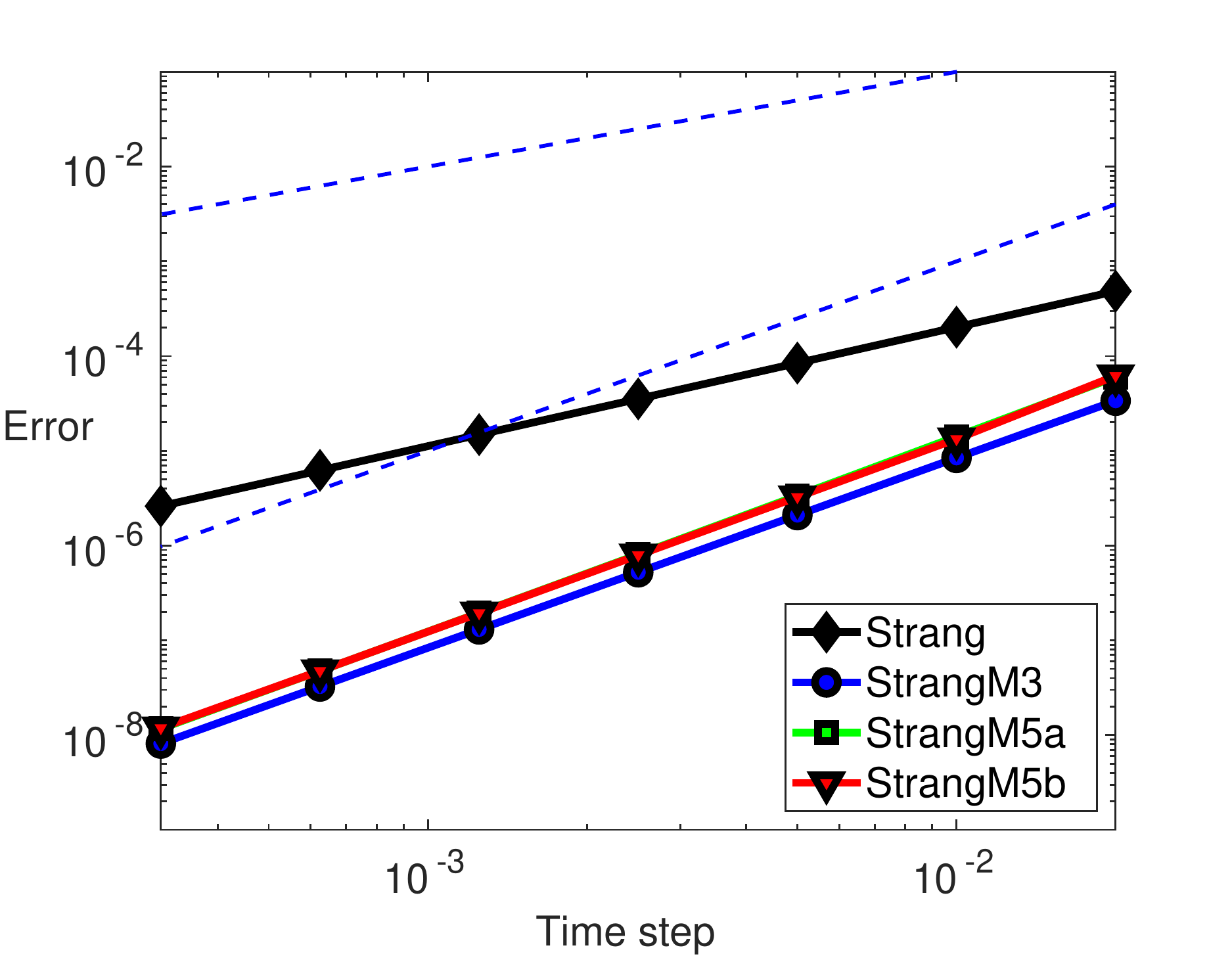} 
\caption*{\textit{$f(u)=u^2$}}
\end{subfigure}%
\begin{subfigure}{.5\textwidth}
\centering
\includegraphics[width=\textwidth]{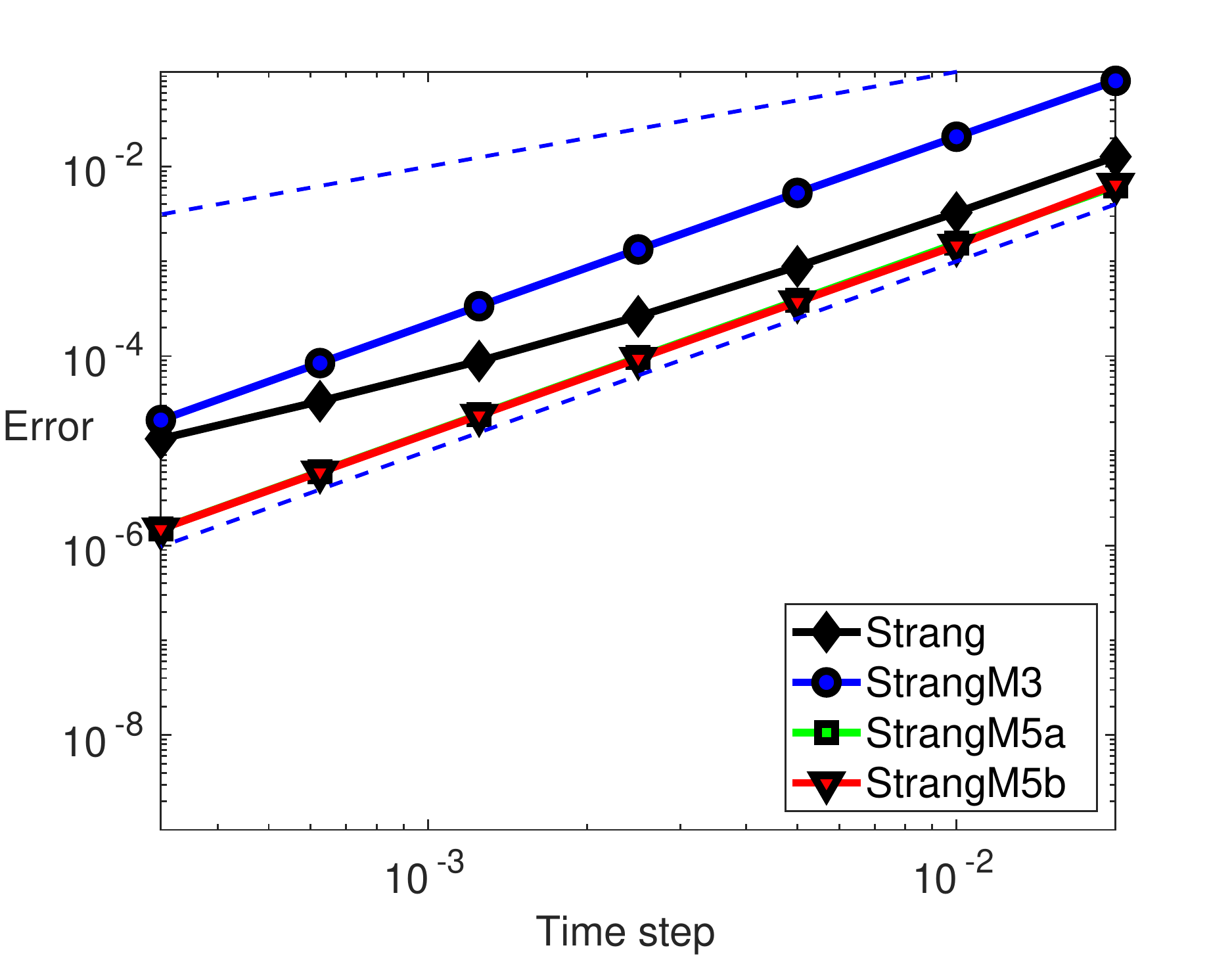}
\caption*{\textit{$f(u)=5u^2$}}
\end{subfigure}
\caption{\textit{Comparison between the splitting methods \emph{Strang}~\eqref{eq : classical Strang 1}, \emph{StrangM3}~\eqref{eq : modified Strang 1}, \emph{StrangM5a}~(\ref{eq : five parts Strang splitting},~\ref{eq : q condition}) and \emph{StrangM5b}~(\ref{eq : five parts Strang splitting},~\ref{eq : q condition 2}) when applied to problem~\eqref{eq:Exemple52} for $m=1$ and $m=5$ on $[0,1]$ with inhomogeneous mixed boundary conditions. The convergence curves of StrangM5a and StrangM5b overlap. Reference slopes one and two are given in dotted lines}.}\label{fig : quadratic}
\end{figure}

We observe in Figure~\ref{fig : quadratic} that the splittings \emph{StrangM3}, \emph{StrangM5a} and \emph{StrangM5b} are all of order two compared to the classical splitting \emph{Strang}. The methods \emph{StrangM5a} and \emph{StrangM5b} have a slightly worse constant of error compared to the splitting \emph{StrangM3} of~\cite{Ost16} for $f(u)=u^2$. Recall that \emph{StrangM3} costs two diffusion flows per time step while \emph{Strang}, \emph{StrangM5a} and  \emph{StrangM5b} cost only  one. For $f(u)=5u^2$, \emph{StrangM5a} and \emph{StrangM5b} become a lot more accurate.
\\
\paragraph*{A meteorology model with an integral source term \textnormal{(see Figure~\ref{fig : IntegroDiff})}} We apply the splitting methods \emph{Strang}~\eqref{eq : classical Strang 1}, \emph{StrangM3}~\eqref{eq : modified Strang 1}, \emph{StrangM5a}~(\ref{eq : five parts Strang splitting},\thinspace\ref{eq : q condition}) and\linebreak \emph{StrangM5b}~(\ref{eq : five parts Strang splitting},~\ref{eq : q condition 2}) to a problem presented in~\cite[Equation 3.1]{Vas92}, where we replace the left Dirichlet boundary condition $u(0,t)=2(2-\sqrt{t})$ by $u(0,t)=2(2-t)$ to have a time continuously differentiable boundary condition. The considered differential equation is the following
\begin{align} \label{eq:IntegroDiff}
\partial_t u(x,t) &= \partial_{xx}u(x,t)-\int_0^1u(s,t)^4\frac{1}{(1+|x-s|)^2}ds,\nonumber\\
u(0,t)&=2(2-t),\qquad
\partial_n u(1,t) = 0,\nonumber\\
u(x,0)&=2(\cos(\pi x)+1),
\end{align}
for $\Omega = [0,1]$. 
We choose $500$ points to discretize the interior of $[0,1]$. We then apply the splitting methods with different time steps $\tau = 2\cdot 10^{-2} \cdot 2^{-k}$, $k=0,\ldots,6$. The reference solution is computed with the classical fourth order explicit Runge-Kutta method and a very small time step $\tau=0.02\cdot 2^{-14}\approx 10^{-6}$. To solve the integral, we use the trapezoidal quadrature formula with the $502$ nodes given by the space discretization. To solve $\partial_t u = f(u)$ and $\partial_t u = f(u)-q_n$, we apply five steps of the classical order four explicit Runge-Kutta method with a time step $\frac{\tau}{10}$. Note that we compute $\partial_t u = f(u)$ and $\partial_t u = f(u)-q_n$ with more precision than is needed in practice to emphasize on the error of the splitting itself. Instead, one can simply use one step of a second order Runge-Kutta method. We compute the solution at final time $T=0.1$.
\begin{figure}
\centering
\begin{subfigure}{.5\textwidth}
\centering
\includegraphics[width=\textwidth]{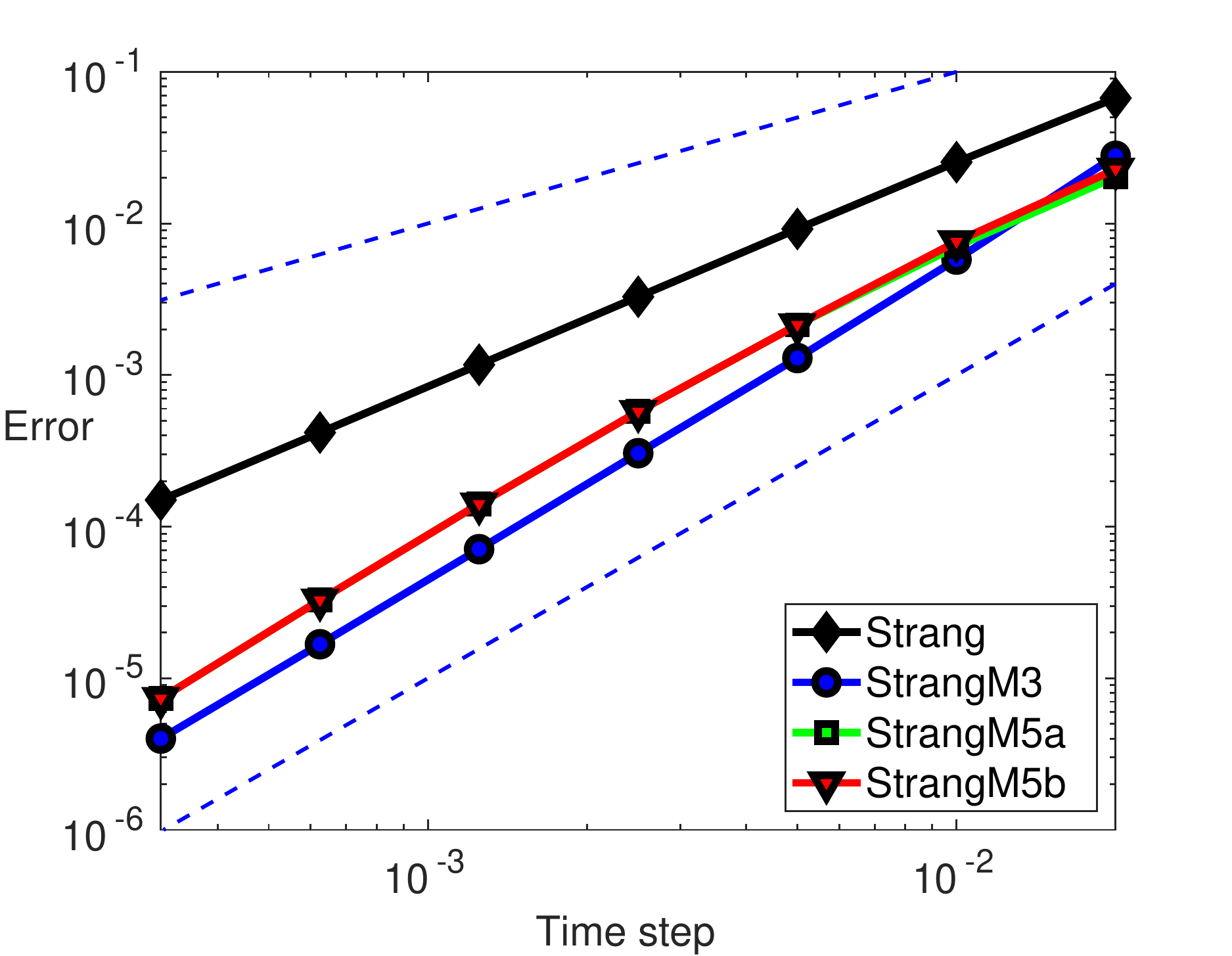} 
\end{subfigure}%
\begin{subfigure}{.5\textwidth}
\centering
\includegraphics[width=\textwidth]{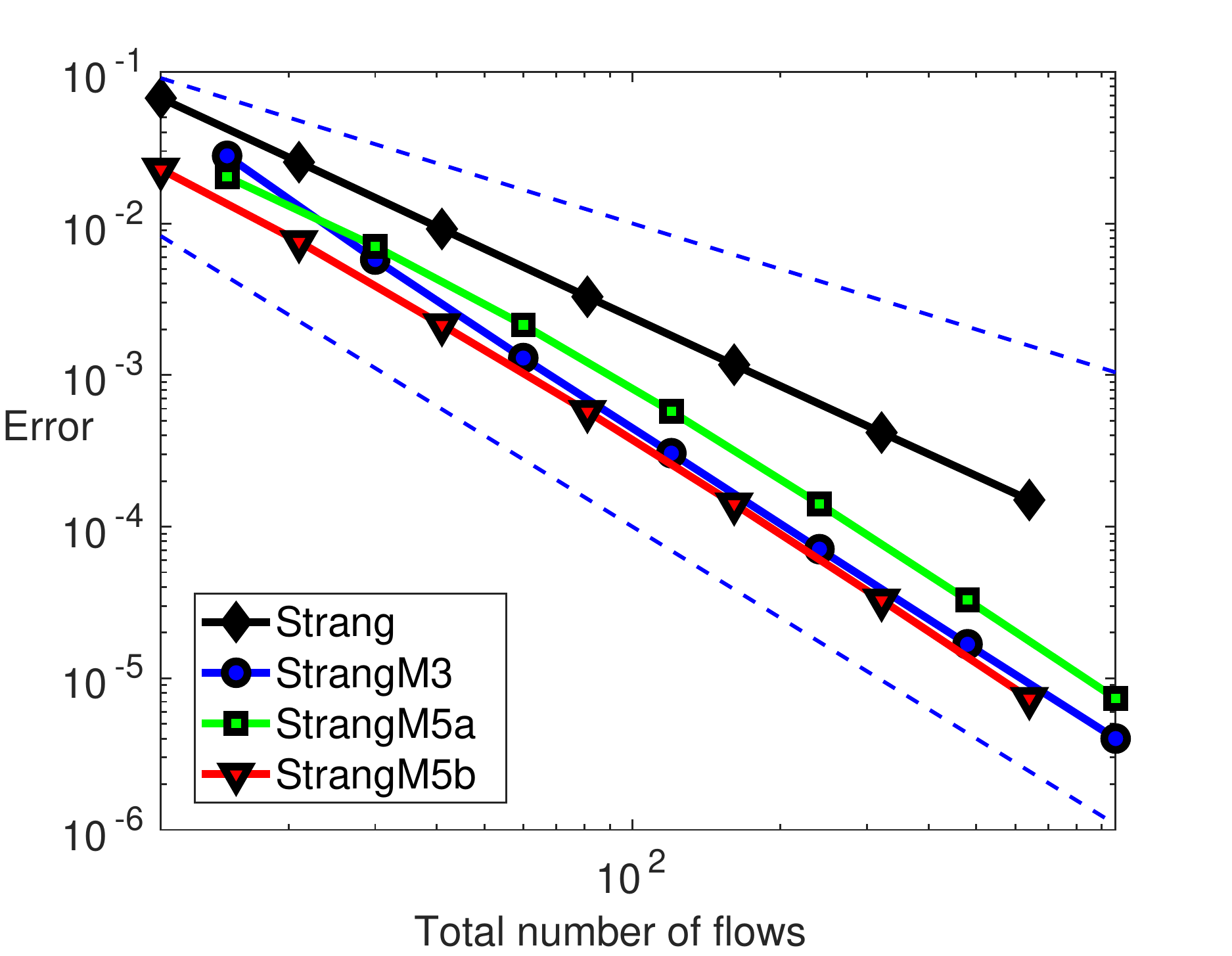}
\end{subfigure}
\caption{\textit{Comparison between the splitting methods \emph{Strang}~\eqref{eq : classical Strang 1}, \emph{StrangM3}~\eqref{eq : modified Strang 1}, \emph{StrangM5a}~(\ref{eq : five parts Strang splitting},~\ref{eq : q condition}) and \emph{StrangM5b}~(\ref{eq : five parts Strang splitting},~\ref{eq : q condition 2}) when applied to the integro-differential equation~\eqref{eq:IntegroDiff} with time dependent boundary conditions. Reference slope ones and two are drawn in dotted line. On the right picture, we compare the error with the number of evaluations of the diffusion flows~(\ref{eq : D},~\ref{eq : D+q}) and the number of evaluations of the source term flows~(\ref{eq : f},~\ref{eq : f-q}) computed during the time integration.}}\label{fig : IntegroDiff}
\end{figure}

We observe in the left picture of Figure~\ref{fig : IntegroDiff} that the modified splitting \emph{StrangM3} given in~\cite{Ost16} has a slightly better constant of error than \emph{StrangM5a} and \emph{StrangM5b}. In the right picture of Figure~\ref{fig : IntegroDiff} however, we recover that the splitting \emph{StrangM5b} requires less evaluations of the flows to obtain a given precision (see Table~\ref{tab : Flows Evaluations} and Remark~\ref{rem : composition}). Since both flows have similar computational costs, the splitting \emph{StrangM5b} is slightly more accurate for the same computational cost. 
Moreover the construction of the correction $q_n$ for the  splitting \emph{StrangM5b} requires less computational cost and is easier to implement than the correction $q_n$ of~\emph{StrangM3} given by~\eqref{eq : correction 3 parts}. Indeed, one needs then to evaluate $f$ on the boundary at each step of the algorithm.

\paragraph*{Case of a stiff nonlinearity \textnormal{(see Figure~\ref{fig : Stiff})}}
 In the next experiments, we compare \emph{Strang}~\eqref{eq : classical Strang 1}, \emph{StrangM3}~\eqref{eq : modified Strang 1}, \emph{StrangM5a}~(\ref{eq : five parts Strang splitting},~\ref{eq : q condition}) and \emph{StrangM5b}~(\ref{eq : five parts Strang splitting},~\ref{eq : q condition 2}) when applied to a stiff problem in a two dimensional domain $\Omega = [0,1]\times [0,1]$. We choose the nonlinearity $f(u)=(1-M\sin(\pi x)\sin(\pi y))u^2$ where $M=1$ in the nonstiff case and $M=100$ in the stiff case. We impose Dirichlet boundary conditions on the left boundary and Neumann boundary conditions on the bottom, top and right boundaries. The boundary conditions are chosen to be consistent with the initial condition $u_0=\frac{\e^x + \e^y}{2}$. We obtain the following problem 
\begin{align}
\partial_t u(x,y,t) &= \partial_{xx} u(x,y,t)+\partial_{yy}u(x,y,t)+(1-M\sin(\pi x)\sin(\pi y))u(x,y,t)^2,\nonumber\\
u(0,y,t)&=\frac{1+\e^y}{2}, \quad
\partial_n u(1,y,t)=\frac{\e}{2}, \quad
\partial_n u(x,0,t) = -\frac{1}{2}, \quad
 \partial_n u(x,1,t) = \frac{\e}{2},                          \nonumber\\
u(x,y,0)&=\frac{\e^x + \e^y}{2},
\end{align}\label{eq : stiff}
where $x,y\in\Omega$, $t\in[0,T]$ and with $M=1$ or $M=100$.
The correction functions $q_n$ are constructed with a multigrid iterative smoothing algorithm, analogously to~\cite[Algorithm 1]{Ost16}. We recall that the method \emph{StrangM3} in~\cite{Ost16} was proposed for nonstiff nonlinearities. Indeed note that $q_n$ is of size $\bigo(M)$ and becomes large for the stiff case $M=100$. For instance in dimension one, on the domain $[0,1]$ with $f(u)=(1-M\sin(\pi x))u^2$ and boundary conditions $u(0,t)=1$, $\partial_nu (1,t)=1$, one obtains $q_n=1+(M\pi u_n(1)^2+2u_n(1))x$. We use a mesh with size $\Delta x = \frac{1}{128}$ to discretize the interior of $\Omega$. We compute the solution at final time $T=0.1$. The chosen time steps are $\tau=0.1\cdot 2^{-k}$, $k=0,\ldots,8$. The reference solution is computed with the classical fourth order explicit Runge-Kutta method and a really small time step $\tau=0.1\cdot 2^{-14}\approx 10^{-6}$. In the splitting algorithms, we use the analytic formulas of $\phi^f_{\frac{\tau}{2}}(u_n)$ and $\phi^{f-q_n}_{\tau}(u_n)$ .
\begin{figure}
\centering
\begin{subfigure}{.5\textwidth}
\centering
\includegraphics[width=\textwidth]{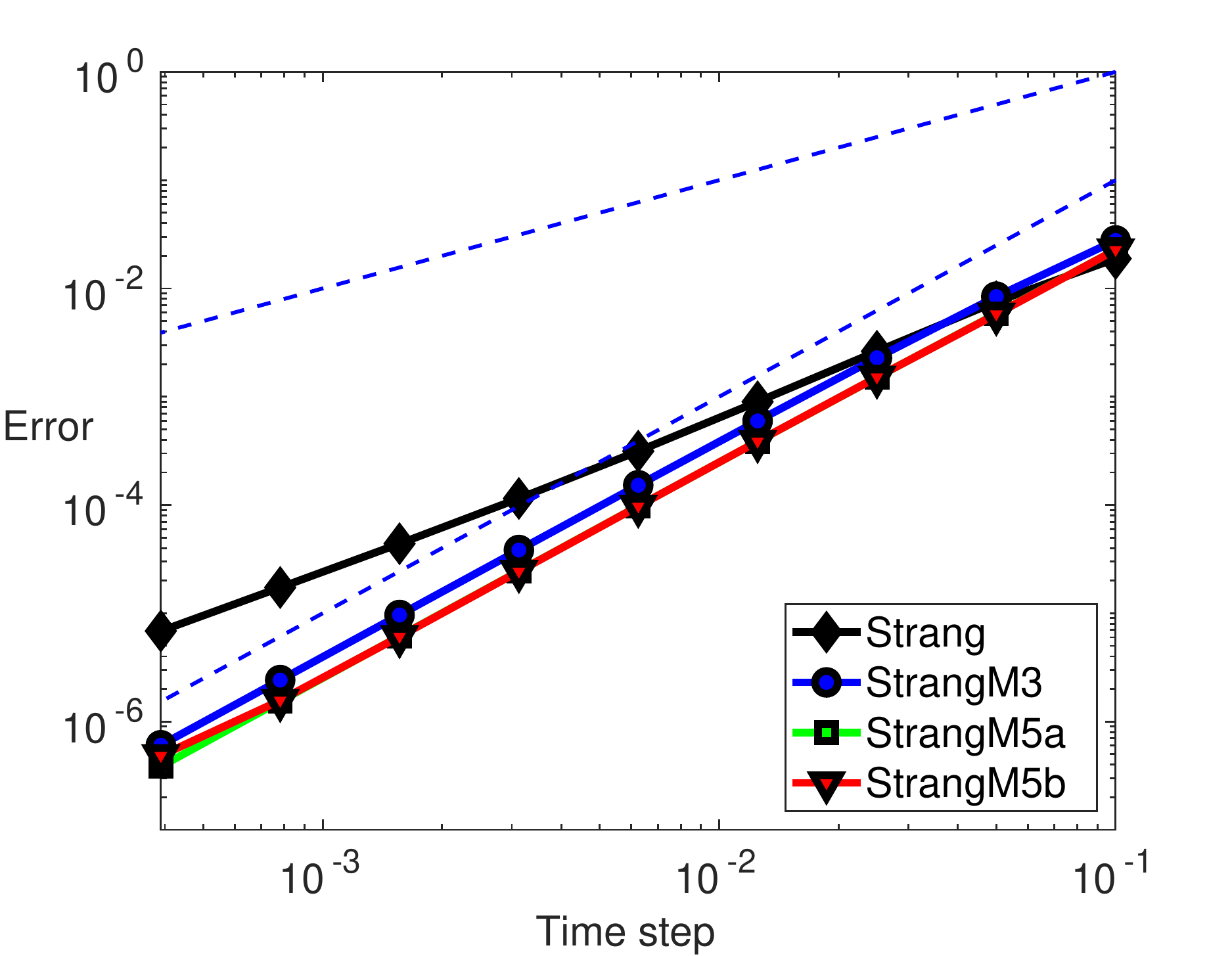} 
\caption*{Nonstiff case $M=1$.}
\end{subfigure}%
\begin{subfigure}{.5\textwidth}
\centering
\includegraphics[width=\textwidth]{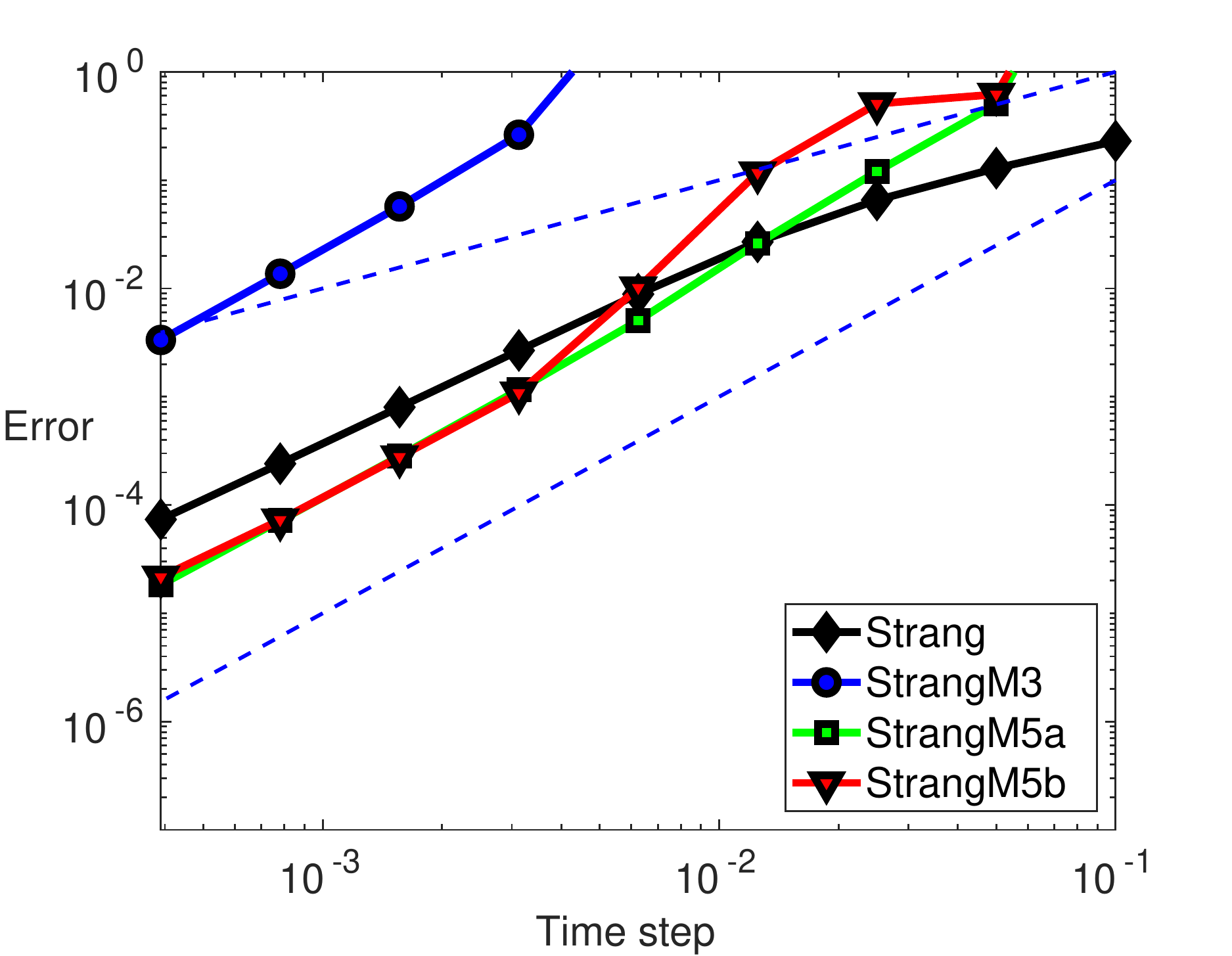}
\caption*{Stiff case $M=100$.}
\end{subfigure}
\begin{subfigure}{.5\textwidth}
\includegraphics[width=\textwidth]{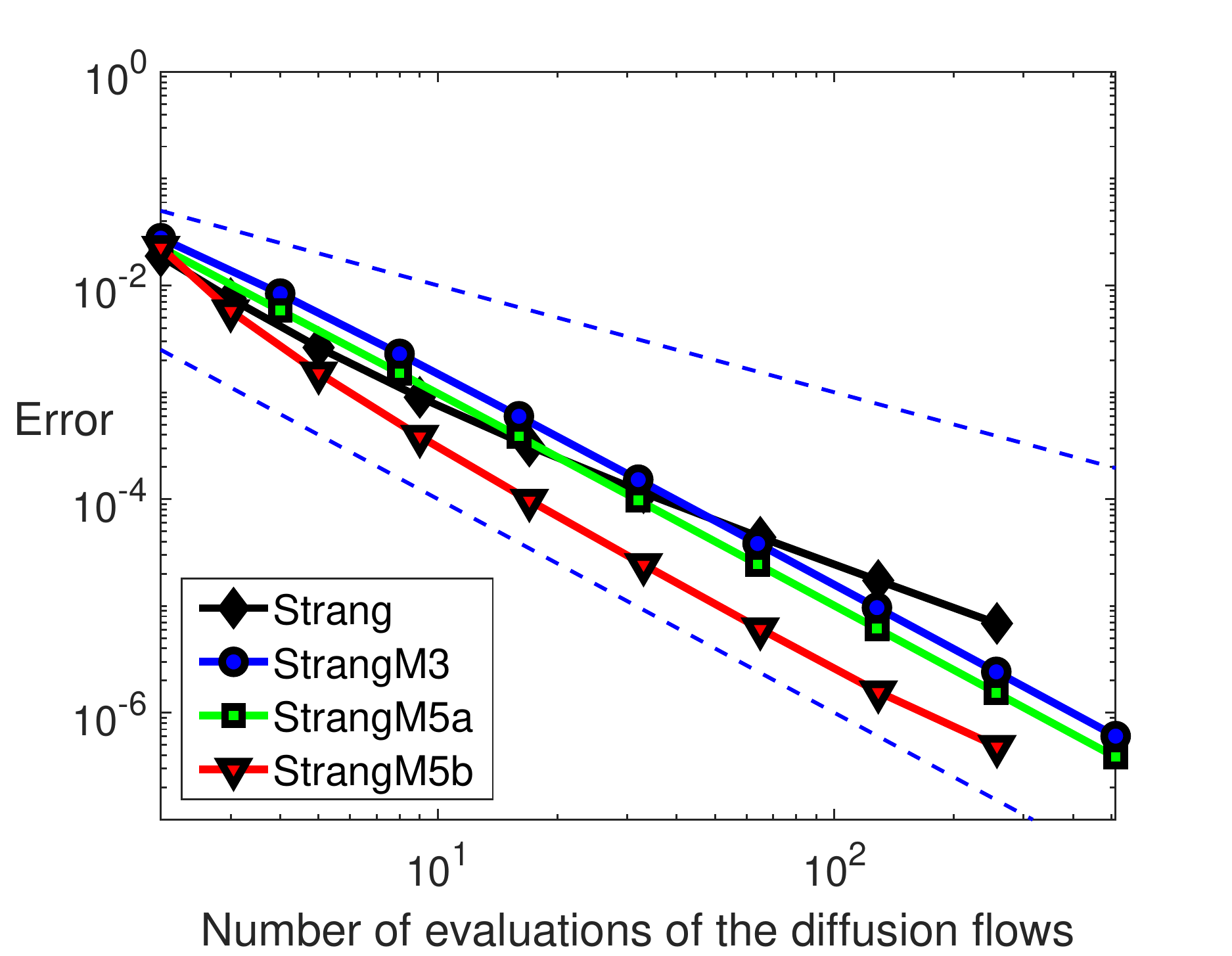} 
\caption*{Nonstiff case $M=1$.}
\end{subfigure}%
\begin{subfigure}{.5\textwidth}
\centering
\includegraphics[width=\textwidth]{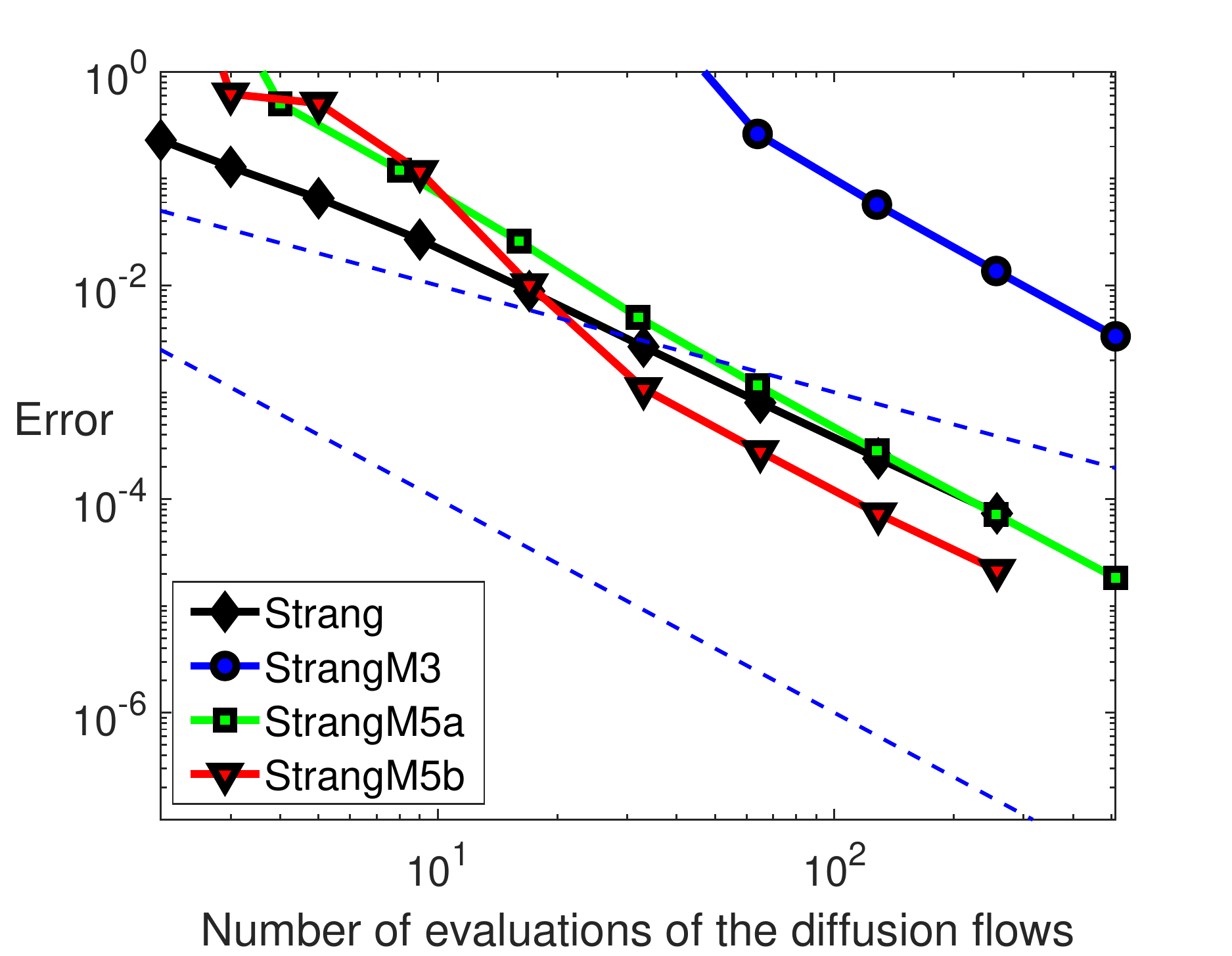}
\caption*{Stiff case $M=100$.}
\end{subfigure}
\caption{\textit{Comparison between the splitting methods \emph{Strang}~\eqref{eq : classical Strang 1}, \emph{StrangM3}~\eqref{eq : modified Strang 1}, \emph{StrangM5a}~(\ref{eq : five parts Strang splitting},~\ref{eq : q condition}) and \emph{StrangM5b}~(\ref{eq : five parts Strang splitting},~\ref{eq : q condition 2}) when applied to equation~\eqref{eq : stiff} with $M=1$ and $M=100$,  on $[0,1]^2$ with inhomogeneous mixed boundary conditions. Reference slopes one and two are given in dotted lines.}}\label{fig : Stiff}
\end{figure}
\\
We observe in the top pictures of Figure~\ref{fig : Stiff} that \emph{StrangM5a} and \emph{StrangM5b} are slightly more accurate than the modified Strang splitting \emph{StrangM3} for $M=1$ and drastically more accurate for the stiff case $M=100$. Indeed, as explained above, in this later stiff case, the accuracy of the splitting \emph{StrangM3} deteriorates due to the large correction involved. Moreover, in the bottom pictures of Figure~\ref{fig : Stiff}, we observe that for the same number of evaluations of the diffusion flows, \emph{StrangM5b} is more accurate than all the other methods also for $M=1$. Indeed, here, the diffusion flows dominates the total cost and we recall that \emph{StrangM5b} only requires one evaluation of the diffusion flow and one evaluation of the source term flow at each step (see Table~\ref{tab : Flows Evaluations} and Remark~\ref{rem : composition}), analogously to the classical Strang splitting \emph{Strang}.
\paragraph{Acknowledgements.} The authors would like to thank Christophe
Besse and Lukas Einkemmer for helpful discussions. This work was
partially supported by the Swiss National Science Foundation, grants No.
200021\_162404, No. 200020\_178752, No. 200020\_144313/1, No. 200020\_184614, No. 200021\_162404 and No. 200020\_178752.
\bibliographystyle{abbrv}
\bibliography{biblioUA}
\end{document}